\renewcommand{\@Opargbegintheorem}[4]{%
  #4\trivlist\item[\hskip\labelsep{#3#2\@thmcounterend}]}
\newcommand{\algcomment}[1]{%
    \vspace{0.25\baselineskip}%
    \noindent%
    \hrule 
    \vspace{0.25\baselineskip}%
    {\footnotesize #1\par}%
    }
\newcommand*{\QEDA}{\hfill\ensuremath{\blacksquare}}
\newtheorem{observation}[theorem]{Observation}
\newcommand{\MPworker}[2]{}
\newcommand{\ot}{\widetilde{\mathcal{O}}}
\newcommand{\createminheap}[1]{\textsc{Create\_Min\_Heap($#1$)}}
\newcommand{\extractmin}[1]{\textsc{Extract\_Min($#1$)}}
\newcommand{\insertheap}[2]{\textsc{Insert($#1, #2$)}}
\newcommand{\findpordering}[1]{\textsc{Find\_$p$-Ordering($#1$)}}
\newcommand{\merge}[1]{\textsc{Merge($#1$)}}
\newcommand{\pexpincrease}[1]{\textsc{$p$-Exponent\_Increase($#1$)}}
\newcommand{\correlate}[1]{\textsc{Correlate($#1$)}}
\newcommand{\Z}{\bbbz}
\newcommand{\Zpk}{\bbbz_{p^k}}
\newcommand{\Zp}[1]{\bbbz_{p^{#1}}}
\newcommand{\rep}[1]{#1^\textit{rep}}
\begin{document}
\title{On algorithms to find $p$-ordering}
\author{Aditya Gulati \and
Sayak Chakrabarti \and
Rajat Mittal}
\institute{IIT Kanpur}
\maketitle
\begin{abstract}

The concept of \emph{$p$-ordering} for a prime $p$ was introduced by Manjul Bhargava (in his PhD thesis) to develop a generalized factorial function over an arbitrary subset of integers. This notion of $p$-ordering provides a representation of polynomials modulo prime powers, and has been used to prove properties of roots sets modulo prime powers. We focus on the complexity of finding a $p$-ordering given a prime $p$, an exponent $k$ and a subset of integers modulo $p^k$.

Our first algorithm gives a $p$-ordering for set of size $n$ in time $\ot(nk\log p)$, where set is considered modulo $p^k$. The subsets modulo $p^k$ can be represented succinctly using the notion of representative roots (Panayi, PhD Thesis, 1995; Dwivedi et.al, ISSAC, 2019); a natural question would be, can we find a $p$-ordering more efficiently given this succinct representation. Our second algorithm achieves precisely that, we give a $p$-ordering in time $\ot(d^2k\log p + nk \log p + nd)$, where $d$ is the size of the succinct representation and $n$ is the required length of the $p$-ordering. Another contribution that we make is to compute the structure of roots sets for prime powers $p^k$, when $k$ is small. The number of root sets have been given in the previous work (Dearden and Metzger, Eur. J. Comb., 1997; Maulick, J. Comb. Theory, Ser. A, 2001), we explicitly describe all the root sets for $p^2$, $p^3$ and $p^4$.

\keywords{root-sets \and computational complexity \and $p$-ordering \and polynomials \and prime powers}
\end{abstract}
    \section{Introduction}
\label{sec:introduction}

Polynomials over finite fields have played a crucial role in computer science with impact on diverse areas like error correcting codes~\cite{BC60,Hoc59,RS60,Mad97}, cryptography~\cite{CR85,Odl85,Len91}, computational number theory~\cite{AL86,AKS04} and computer algebra \cite{Zas69,LLL82}. Mathematicians have studied almost all aspects of these polynomials; factorization of polynomials, roots of a polynomial and polynomials being irreducible or not are some of the natural questions in this area. There is lot of structure over finite field; we can deterministically count roots and find if a polynomials is irreducible in polynomial time~\cite{LN97}. Not just that, we also have efficient randomized algorithms for the problem of factorizing polynomials over finite fields~\cite{CZ81,Ber70}.

The situation changes drastically if we look at rings instead of fields. Focusing our attention on the case of numbers modulo a prime power (ring $\Zpk$, for a prime $p$ and a natural number $k\geq 2$) instead of numbers modulo a prime (field $\mathbb{F}_p$), we don't even have unique factorization and the fact that the number of roots are bounded by the degree of the polynomial. Still, there has been some interesting work in last few decades. Maulik~\cite{Mau01} showed bound on number of roots sets, sets which are roots for a polynomial modulo a prime power. There has been some recent works giving a randomized algorithm for root finding~\cite{BLQ13} and a deterministic algorithm for root counting~\cite{DMS19,CGRW19}. 

The concept of \textit{$p$-ordering} and \textit{$p$-sequences} for a prime $p$, introduced by Bhargava~\cite{Bha97}, is an important tool in studying the properties of roots sets and polynomials over powers of prime $p$~\cite{Mau01,Bha97,Bha09}. Bhargava's main motivation to introduce $p$-ordering was to generalize the concept of factorials ($n!$ for $n \geq 0 \in \Z$) from the set of integers to any subset of integers. He was able to show that many number-theoretic properties of this factorial function (like the product of any $k$ consecutive non-negative integers is divisible by $k!$) remain preserved even with the generalized definition for a subset of integers~\cite{Bha00}. 

For polynomials, $p$-ordering allows us to give a convenient basis for representing polynomials on a subset of ring $\Zpk$. One of the interesting problem for polynomials over rings, of the kind $\Zpk$, is to find the allowed \textit{root sets} (Definition~\ref{def:root-set}). Maulik~\cite{Mau01} was able to use this representation of polynomials over $\Zpk$ (from $p$-ordering) to give asymptotic estimates on the number of root sets modulo a prime power $p^k$; he also gave a recursive formula for root counting.      

\paragraph{Our contributions}
While a lot of work has been done on studying the properties of $p$-orderings~\cite{Mau01,Joh09,Bha09}, there's effectively no work on finding the complexity of the problem: given a subset of numbers modulo a prime power, find a $p$-ordering. Our main contribution is to look at the computational complexity of finding $p$-ordering in different settings. We also classify and count the root-sets for $\Zpk$, when $k \leq 4$, by looking at their symmetric structure.

\begin{itemize}

\item[\textbullet] \textit{$p$-ordering for a general set:} Suppose, we want to find the $p$-ordering of a set $S\subseteq \Zpk$, such that, $|S| = n$. A naive approach, given in Bhargava~\cite{Bha97}, gives a $\ot{(n^3k\log(p))}$ time algorithm. We exploit the recursive structure of $p$-orderings and optimize the resulting algorithm using data structures. 
These optimizations allow us to give an algorithm that works in $\ot{(nk\log(p))}$ time. The details of the algorithm, its correctness and time complexity is given in Section~\ref{sec:algo_set}.

\item[\textbullet] \textit{$p$-ordering for a subset in representative root representation:} A polynomial of degree $d$ in $\Zpk$ can have exponentially many roots, but they can have at most $d$ \textit{representative roots}~\cite{Pan95,DMS19,BLQ13} giving a succinct representation. The natural question is, can we have an efficient algorithm for finding a $p$-ordering where the complexity scales according to the number of representative roots and not the size of the complete set. We answer this in affirmative, and provide an algorithm which works in $\ot{(d^2k\log{p}+nk\log{p}+nd)}$ time, where $d$ is the number of representative roots and $n$ is the length of $p$-ordering. The details of this algorithm and its analysis are presented in Section~\ref{sec:algo_rep_roots}. 

\item[\textbullet] \textit{Roots sets for small powers:} A polynomial in $\Zpk$, even with small degree, can have exponentially large number of roots.
But not all subsets of $\Zpk$ are a root-set for some polynomial.
The number of root-sets for the first few values of $k$ were calculated numerically by Dearden and Metzger~\cite{DM97}.
Building on previous work, Maulik~\cite{Mau01} produced an upper bound on the number of root-sets for any $p$ and $k$.
He also gave a recursive formula for the exact number of root-sets using the symmetries in their structure. We look at the structure of these root sets and completely classify all possible root-sets for $k\leq 4$.
In Section~\ref{sec:root-set-structure}, we discuss and distinctly describe all the root sets in $\Zp{2}$, $\Zp{3}$ and $\Zp{4}$. 
\end{itemize}

    \section{Preliminaries} \label{sec:preliminaries}
Our primary goal is to find a $p$-ordering of a given set $S \subseteq \Zpk$, for a given prime $p$ and an integer $k > 0$. Since the input size is polynomial in $|S|, \log p, k$; an efficient algorithm should run in time polynomial in these parameters. For the sake of clarity, $\log k$ factors will be ignored from complexity calculations; this omission will be expressed by using notation $\ot$ instead of $\mathcal{O}$ in time complexity. We also use $[n]$ for the set $\{0,1,\dots,n-1\}$.

We begin by defining the valuation of an integer modulo a prime $p$.
\begin{definition}\label{def:valuation}
Let $p$ be a prime and $a\neq 0$ be an integer. The \emph{valuation} of the integer $a$ modulo $p$, denoted $v_p(a)$, is the integer $v$ such that $p^v\mid a$ but $p^{v+1}\nmid a$. We also define $w_p(a) := p^{v_p(a)}$.

If $a=0$ then both, $v_p(a)$ and $w_p(a)$, are defined to be $\infty$.
\end{definition}

\begin{definition}  \label{def:set-operation}
For any ring $S$ with the usual operations $+$ and $\ast$, we have
\[S+a := \{x+a \mid x\in S\}\textit{\quad and\quad }a*S := \{a \ast x \mid x\in S\}\]
\end{definition}  

\begin{definition}
\label{def:root-set}
A given set $S$ is called a \emph{root set} in a ring $R$ if there is a polynomial $f(x) \in R[x]$, whose roots in $R$ are exactly the elements of $S$.
\end{definition}

\paragraph{Representative Roots:} 
The notion of representative roots in the ring $\Zpk$ has been used to concisely represent roots of a polynomial~\cite{Pan95,DMS19,BLQ13}.

\begin{definition} The representative root $(a + p^i\ast)$ is a subset of $\Zpk$, $$a + p^i\ast:=\{a+p^iy\mid y\in \Zp{k-i-1}\}$$ 
\end{definition} \label{def:rep-root}
Extending, a set $S=\{r_1, \cdots, r_l\}$ of representative roots correspond to $ \bigcup\limits_{i=1}^l r_i \subseteq \Zpk$. 
Conversely, we show that an $S \subseteq \Zpk$ can be uniquely represented by representative roots.

\begin{definition} \label{def:minimal_definition}
Let $S\subseteq \Zpk$, then the set of representative roots $\rep{S}=\{r_1 = \beta_1+p^{k_1}\ast, r_2 = \beta_2+p^{k_2}\ast,...,r_l = \beta_l+p^{k_l}\ast\}$  is said to be a minimal root set representation of $S$ if
\begin{enumerate}
    \item $S = \bigcup\limits_{i=1}^{l} r_i$,
    \item $\nexists~r_i, r_j \in \rep{S}: r_i \subseteq r_j$,
    \item $\forall i: \bigcup\limits_{b\in [p]} \left( r_i+p^{k_i-1}\cdot b \right)  \nsubseteq S$
\end{enumerate}
\end{definition}

\begin{theorem}\label{thm:uniqueness_of_representation}
Given any set $S\subseteq \Zpk$, the minimal root set representation of $S$ is unique.
\end{theorem}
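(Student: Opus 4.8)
The plan is to show that any minimal root set representation of $S$ must coincide with one fixed family of representative roots determined by $S$ alone, so that uniqueness follows immediately. The key preliminary step is to understand containment between representative roots. For $r=\beta+p^i\ast$ with $i\ge 1$, call $\mathrm{par}(r):=\beta+p^{i-1}\ast$ its \emph{parent}; a short computation gives $\bigcup_{b\in[p]}\bigl((\beta+p^{i}\ast)+p^{i-1}b\bigr)=\beta+p^{i-1}\ast$, so condition~3 of Definition~\ref{def:minimal_definition} is precisely the statement that $\mathrm{par}(r_i)\nsubseteq S$ for every $r_i\in\rep{S}$, i.e.\ no element of $\rep{S}$ can be enlarged to its parent and still remain inside $S$. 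One also records the order-theoretic facts that any two representative roots are either disjoint or $\subseteq$-comparable, and that $r\subsetneq r'$ forces $\mathrm{par}(r)\subseteq r'$ (so the representative roots containing a fixed one form a chain, namely its iterated parents).

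With this in hand, I would attach to each $x\in S$ a representative root $\sigma_S(x)$ by a greedy construction: start from the singleton $\{x\}$, which is a representative root, and, as long as the parent of the current root lies in $S$, replace that root by its parent. Since all representative roots that contain $x$ and are contained in $S$ form a chain under $\subseteq$ (any two of them are not disjoint, hence comparable), this process terminates at the unique $\subseteq$-maximal representative root $r$ with $x\in r\subseteq S$; write it $\sigma_S(x)$. By construction $x\in\sigma_S(x)\subseteq S$ and $\mathrm{par}(\sigma_S(x))\nsubseteq S$.

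The heart of the argument is then the claim $\rep{S}=\{\sigma_S(x):x\in S\}$; as the right side depends only on $S$, this settles the theorem. For ``$\subseteq$'', take $r_i\in\rep{S}$ and any $x\in r_i$: then $r_i$ is a representative root with $x\in r_i\subseteq S$ (by condition~1, $r_i\subseteq\bigcup_j r_j=S$), so $r_i\subseteq\sigma_S(x)$, and if this were strict then $\sigma_S(x)$ would be a proper ancestor of $r_i$, giving $\mathrm{par}(r_i)\subseteq\sigma_S(x)\subseteq S$ and contradicting condition~3; hence $r_i=\sigma_S(x)$. For ``$\supseteq$'', given $x\in S$, condition~1 supplies some $r_j\in\rep{S}$ with $x\in r_j$, and the same argument shows $r_j=\sigma_S(x)$, so $\sigma_S(x)\in\rep{S}$. (One can check separately that $\{\sigma_S(x):x\in S\}$ does satisfy all three conditions, which gives existence as well, and in passing that condition~2 is actually implied by conditions~1 and~3.) I expect the only genuinely technical point to be the order-theoretic lemma of the first paragraph, which reduces to verifying that $\beta+p^i\ast\subseteq\gamma+p^j\ast$ iff $j\le i$ and $\beta\equiv\gamma\pmod{p^{j}}$; everything after that is bookkeeping with the three defining conditions, with no induction on $k$ and no estimates involved.
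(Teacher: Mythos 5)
Your proof is correct, and it is organized differently from the paper's even though the decisive mechanism is the same. The paper argues directly by contradiction: if $\rep{S}$ and $\widehat{\rep{S}}$ are two distinct minimal representations, some $a\in S$ lies in roots $r\in\rep{S}$ and $\widehat{r}\in\widehat{\rep{S}}$ with $r\neq\widehat{r}$; by Observation~\ref{thm:intersect_theorem} one strictly contains the other, say $\widehat{r}\subset r$, and then $\bigcup_{b\in[p]}\left(\widehat{r}+b\cdot p^{k_2-1}\right)\subseteq r\subseteq S$ violates condition~3 of Definition~\ref{def:minimal_definition} for $\widehat{\rep{S}}$. You instead exhibit a canonical family, $\{\sigma_S(x):x\in S\}$, the maximal representative roots contained in $S$, and show any minimal representation must coincide with it; the step where you exclude $r_i\subsetneq\sigma_S(x)$ (its parent would then lie in $S$, contradicting condition~3) is exactly the paper's contradiction, and your reformulation of condition~3 as a parent condition plus the nesting lemma is the same content as Observation~\ref{thm:intersect_theorem}. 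What your route buys beyond the paper's: it proves existence of the minimal representation, gives an explicit description of it, and shows condition~2 is redundant given conditions~1 and~3; the paper's argument is shorter but yields only uniqueness. Both treatments implicitly assume $k_i\geq 1$ when invoking condition~3 (the root $\Zpk$ itself has no parent), so there is no gap on your side beyond what the paper also glosses over.
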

\begin{proof}
For the sake of contradiction, let $\rep{S}$ and $\widehat{\rep{S}}$ be two different minimal representations of a set $S$. 
There exists an $a\in S$ such that it belongs to both representations, $r \in \rep{S}$ and $\widehat{r} \in \widehat{\rep{S}}$ and $r \neq \widehat{r}$. Then $r$ can be written as $a+p^{k_1}\ast$ and $\widehat{r}$ can be written as $a+p^{k_2}\ast$.

By Observation~\ref{thm:intersect_theorem}, $r\cap \widehat{r}\neq \emptyset$ implies $r \subset \widehat{r}$ or $\widehat{r} \subset r$. Without loss of generality, let $\widehat{r} \subset r$ (equivalently $k_1 < k_2$).

From $\widehat{r} \subset r$ and $k_1 < k_2$, $(\widehat{r} + b\cdot p^{k_2-1}) \subseteq r$ for all $b \in [p]$. Using $r\subseteq S$, we get $$\bigcup\limits_{b\in [p]} \left(\widehat{r} + b\cdot p^{k_2-1}\right) \subseteq S,$$ contradicting minimality of $\widehat{\rep{S}}$. 

 \QEDA
\end{proof}
\paragraph{$p$-ordering and $p$-sequence}
Bhargava~\cite{Bha97} introduced the concept of $p$-ordering
for any subset of a Dedekind domain, we restrict to the rings of the form $\Zpk$~\cite{Bha00}.

\begin{definition}[\cite{Bha97}]
\label{def:$p$-ordering}
$p$-ordering on a subset $S$ of $\Zpk$ is defined inductively.
\begin{enumerate}
    \item Choose any element $a_0 \in S$ as the first element of the sequence.
    \item Given an ordering $a_0, a_1, \dots a_{i-1}$ up to $i-1$, choose $a_i \in S \backslash \{a_0, a_1 \dots a_{i-1}\}$ which minimizes $v_p((a_i-a_0)(a_i-a_1)\dots (a_i-a_{i-1}))$.
\end{enumerate}

\end{definition}
The $i$-th element of the associated \textit{$p$-sequence} for a $p$-ordering $a_0, a_1, \dots a_n$ is defined by 
\[v_p(S,i) = \begin{cases} 
0 \qquad  i = 0, \\
w_p((a_i-a_0)(a_i-a_1) \dots (a_i-a_{i-1})) \qquad  i > 0.
\end{cases}
\]. 
In the $(i+1)$-th step, let $x \in S\setminus\{a_0,a_1,...,a_{i-1}\}$ then the value  $v_p((x-a_0)(x-a_1) \dots (x-a_{i-1}))$ is denoted as the \textit{$p$-value} of $x$ at that step. If the step is clear from context, we call the $p$-value of that element at that step as its $p$-value.

The $p$-ordering on a subset of $\Z$ can be defined similarly. Bhargava surprisingly proved the following theorem.
\begin{theorem}[\cite{Bha97}]\label{thm:p_uniqueness}
For any two $p$-orderings of a subset $S \subseteq \Z$ and a prime $p$, the associated $p$-sequences are same.
\end{theorem}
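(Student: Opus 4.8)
The plan is to prove that the $p$-sequence is an invariant of the set $S$ — independent of which $p$-ordering we choose — by induction on the index $i$. The key structural fact I would isolate first is a ``localization'' or ``partition-by-residue'' property: for a fixed prime $p$, the valuation $v_p$ of a product $(x-a_0)(x-a_1)\cdots(x-a_{i-1})$ only sees, for each difference $x-a_j$, how far $x$ and $a_j$ agree in their base-$p$ expansions. Concretely, partition $S$ into residue classes modulo $p$; then $v_p((x-a_j)) = 0$ whenever $x$ and $a_j$ lie in different classes, so only the elements of $S$ in $x$'s own class contribute to the product. This means the greedy minimization in Definition~\ref{def:$p$-ordering} decouples across residue classes in a controlled way, and recursing (dividing each class by $p$ after subtracting a representative) reduces the statement to a smaller instance.

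The main steps, in order, would be: (1) set up the induction on $i$, with hypothesis that $v_p(S,j)$ is the same for all $p$-orderings for every $j < i$, and moreover that the multiset of ``running $p$-values'' available at step $i$ is determined by $S$; (2) prove the residue-class decomposition lemma above, showing that if $S = \bigsqcup_{r=0}^{p-1} S_r$ by residue mod $p$, then the $p$-sequence of $S$ is obtained by interleaving (sorting together) the $p$-sequences of the shifted-and-scaled pieces $\tfrac{1}{p}(S_r - c_r)$, with an appropriate additive shift in valuation coming from the factor of $p$ pulled out of each relevant difference; (3) observe that in a single residue class, all pairwise differences are divisible by $p$, so after scaling we get a genuinely smaller sub-problem to which the inductive hypothesis (on, say, $k$, or on $|S|$) applies; (4) conclude that since at each step the greedy algorithm picks an element achieving the current minimum $p$-value, and the multiset of achievable $p$-values is ordering-independent by the decomposition, the $i$-th term of the $p$-sequence is forced. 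A clean way to package this is to show directly that $v_p(S,i)$ equals a closed-form quantity depending only on $S$ and $i$ — for instance expressed via the sizes of the sub-blocks obtained by repeatedly refining $S$ modulo successive powers of $p$ — which is manifestly ordering-independent.

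I expect the main obstacle to be step (2)/(3): making the recursion airtight when a residue class has been exhausted before step $i$. Once all elements of some $S_r$ have already been placed in the ordering, the element chosen at step $i$ must come from a class that still has unplaced elements, and one has to argue that the minimum $p$-value over the remaining elements is still computed ``class-locally'' and still matches what any other valid $p$-ordering would produce at that stage. Equivalently, the subtlety is that different $p$-orderings may interleave the classes in different relative orders, so I must show the resulting sequence of valuations is the same multiset regardless — this is exactly where Bhargava's result is surprising, and where an exchange argument or a careful bookkeeping of ``how many elements of each refined block have been used'' is needed. A secondary, more routine point to handle carefully is the base case and the degenerate situations ($x=a_j$ cannot occur since we choose from $S\setminus\{a_0,\dots,a_{i-1}\}$, and $v_p$ of a nonzero integer is finite), together with confirming that the additive shift pulled out of each class in the recursion is uniform across that class, so that the scaled sub-problem is well-posed.
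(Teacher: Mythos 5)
The paper never proves this statement --- it is imported verbatim from Bhargava --- so there is no in-paper proof to match against; what you have sketched is, however, essentially the recursive machinery this paper builds for its algorithm (Observation~\ref{thm:merge}, Observation~\ref{thm:translate_theorem}, and the merge argument behind Theorem~\ref{merge_algorithm}) repurposed as a uniqueness proof, whereas Bhargava's original argument is a more global one (minimality of the valuation of difference products, tied to the polynomial basis), not a residue-class recursion. Your route is viable, and arguably more algorithmic, but to close it you must supply two facts you only gesture at. First, class-locality plus greedy minimality give that \emph{any} $p$-ordering of $S$ restricts to a $p$-ordering of each class $S_r$ with the same $p$-values: if some unplaced $y\in S_r$ beat the chosen $x\in S_r$ against the placed elements of $S_r$, it would also beat it globally, since elements outside $S_r$ contribute valuation $0$. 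Second, $p$-sequences are non-decreasing: the $p$-value of any remaining element can only grow as elements are placed, and at the previous step it was already at least the value just chosen. With these, the valuation sequence of $S$ is obtained by merging the (recursively unique, suitably shifted after the divide-by-$p$ rescaling) class $p$-sequences by always taking a minimal head, and merging non-decreasing sequences this way yields the sorted multiset union \emph{independently of tie-breaking} --- this disposes of your ``different interleavings'' worry without any exchange argument.

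Two smaller corrections. Your step (1) claims the multiset of ``running $p$-values available at step $i$'' is ordering-independent; that is false when a cross-class tie is broken differently (the per-class counts, hence the heads, then differ), and it is also unnecessary --- only the output value at each step is forced, which is what the sorted-merge argument gives. And for well-foundedness of the recursion you should not induct on $|S|$ alone: if all of $S$ lies in one residue class the instance does not shrink in cardinality, so induct on the bound $k$ with $S\subseteq[p^k]$ (or on the largest element), exactly as the paper does in the proof of Theorem~\ref{p_value_preserve_theorem}; for infinite $S\subseteq\Z$, fix the index $i$ and argue about the finite prefix only.
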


Few observations about $p$-orderings/$p$-sequences/ representative roots are listed in Appendix~\ref{apx:observations}. We also use a min-heap data structure to optimize our algorithm, details of min-heap are given in Appendix~\ref{apx:heap}.

    \section{Algorithm to find p-ordering on a given set}
\label{sec:algo_set}
The naive algorithm for finding the $p$-ordering, from its definition, has time complexity $\ot(n^3k\log(p))$ (Appendix~\ref{apx:naive-algo}). The main result of this section is the following theorem.
\begin{theorem} \label{thm:main-thm-p-ordering}
Given a set $S \subseteq \Z$, a prime $p$ and an integer $k$ such that each element of $S$ is less than $p^k$, we can find a $p$-ordering on this set in $\ot(nk\log p)$ time. 
\end{theorem}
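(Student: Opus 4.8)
The plan is to exploit the recursive/tree structure of $p$-orderings. The key observation is that the $p$-value of an element $x$ at step $i$ is governed by how $x$ clusters with the already-chosen elements modulo increasing powers of $p$: the valuation $v_p((x-a_0)\cdots(x-a_{i-1}))$ counts, with multiplicity, the number of previously chosen $a_j$ congruent to $x$ modulo $p$, plus the number congruent modulo $p^2$, and so on up to $p^k$. So I would first build the (compressed) $p$-ary trie on $S$: the root splits $S$ into at most $p$ residue classes mod $p$, each of those splits mod $p^2$, etc., with leaves being individual elements (congruence classes are only tracked to depth $k$, and collisions at depth $k$ are handled by storing multiplicities). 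Since $|S| = n$, this trie has $\mathcal{O}(n)$ branching nodes and depth $\le k$, and can be built in $\ot(nk\log p)$ time by radix-sorting the elements on their base-$p$ digits.

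Next I would maintain, for every node $v$ of the trie, a counter $c(v)$ of how many already-selected elements lie in the subtree of $v$; when we pick an element $a_i$, the quantity $v_p((a_i - a_j)\cdots)$ contributed does not need recomputation from scratch — the $p$-value of any not-yet-chosen $x$ is exactly $\sum_{v \text{ on root-to-}x\text{ path}} c(v)$ (each internal level below the lowest common ancestor of $x$ with any chosen point contributes its count once). To select the next element greedily we need, at each step, the element minimizing this sum. The trick is that within any subtree all leaves see the same contribution from ancestors of that subtree's root, so the minimum is attained by a recursive choice: from each node pick the child subtree whose (current minimum $p$-value $+$ count adjustments) is smallest. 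I would therefore keep at each node a min-heap keyed on the children's current best achievable $p$-value, and store at each node its current best value; selecting $a_i$ is then a walk from the root down to a leaf, following heap-minimum children, in $\ot(k\log p)$ time per step (heap operations cost $\ot(\log p)$ since each node has $\le p$ children, and the walk has length $\le k$).

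After selecting $a_i$ I must update the data structure: incrementing $c(v)$ for the $\le k$ nodes on the root-to-$a_i$ path raises the $p$-value of exactly the elements under those nodes, but crucially it raises them all by the same amount per affected level, so the relative order inside each subtree is preserved — only the stored "best value" of each node on the path changes, and then these changes propagate upward through the parent heaps. Thus each selection triggers $\mathcal{O}(k)$ heap updates of cost $\ot(\log p)$ each, for $\ot(k\log p)$ per step and $\ot(nk\log p)$ over all $n$ steps; together with the $\ot(nk\log p)$ preprocessing this gives the claimed bound. The main obstacle I anticipate is the bookkeeping for the "best value" propagation: I need to argue carefully that an increment on a root-to-leaf path changes each affected node's minimum by a predictable amount (so that I never have to re-heapify a whole subtree), and handle the boundary case at depth $k$ where several elements of $S$ share the same residue mod $p^k$ (so their mutual differences have valuation $\ge k$ and effectively "cap out"), as well as the degenerate elements that equal a chosen $a_j$ exactly (valuation $\infty$), which must be excluded from further consideration. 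Getting these invariants stated cleanly — and proving that the greedy root-to-leaf walk indeed returns a global minimizer at every step — is the technical heart; correctness of the output as a valid $p$-ordering then follows directly from Definition~\ref{def:$p$-ordering}.
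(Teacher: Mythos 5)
Your proposal is correct in outline, but it takes a genuinely different route from the paper. The paper works bottom-up by divide-and-conquer: it splits $S$ into residue classes $S_j$ mod $p$, recursively computes a $p$-ordering of each translated-and-scaled class $(S_j-j)/p$ (using the translation/scaling invariances of $p$-orderings and $p$-sequences), and then \emph{merges} the precomputed orderings with a single min-heap of size at most $p$ keyed on the already-known $p$-values; the burden of the correctness proof is a merge lemma (interleaving valid $p$-orderings of the classes by $p$-value yields a valid $p$-ordering of $S$, via the observation that cross-class factors contribute valuation $0$) plus the lemmas tracking how valuations change under translation and division by $p$. You instead implement the greedy definition directly: the identity $v_p\bigl(\prod_{j<i}(x-a_j)\bigr)=\sum_{t\ge 1}\#\{j<i : x\equiv a_j \bmod p^t\}$ lets you read off every candidate's current $p$-value as a sum of subtree counters along its root-to-leaf path in the base-$p$ trie (the trie is exactly the paper's recursion tree), and you select the minimizer by a tournament-like walk through per-node heaps. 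Correctness then follows almost immediately from Definition~\ref{def:$p$-ordering}, with no merge theorem needed; the step you flag as the technical heart does go through, since with $B(v)=c(v)+\min_{w \text{ child of } v}B(w)$ (leaves: $0$ if unchosen, $\infty$ once chosen) the $B$-value of any node off the selected root-to-leaf path is untouched by an update, so each selection costs only $\mathcal{O}(k)$ key-increase operations of $\ot(\log p)$ each, matching the paper's $\ot(nk\log p)$ bound. The remaining care is purely implementational: you need handle-tracking heaps (or tournament trees) supporting key updates in $\ot(\log p)$, and if you path-compress the trie you must weight a compressed edge's counter by its length, though the uncompressed depth-$k$ trie already fits the time budget; also note that distinct nonnegative elements below $p^k$ cannot collide mod $p^k$, so your multiplicity caveat at depth $k$ is vacuous here. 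The trade-off: the paper's route localizes all data-structure work to one heap per merge but needs the merge and translation lemmas; yours needs a slightly more elaborate dynamic structure but a shorter correctness argument that never leaves the definition.
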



The proof of the theorem follows by constructing an algorithm to find the $p$-ordering.

\paragraph{Outline of the algorithm}
We use the recursive structure of $p$-ordering given by Maulik \cite{Mau01}. Crucially, to find the $p$-value of an element $a$ at each step, we only need to look at elements congruent $\bmod{p}$ to $a$ (Observation~\ref{thm:merge}). 

Suppose $S_j$ is the set of elements of $S$ congruent to $j \bmod p$. By the observation above, our algorithm constructs the $p$-ordering of set $S$ by merging the $p$-ordering of $S_j$'s. Given a $p$-ordering up to some step, the next element for the $p$-ordering of $S$ is computed by just comparing the first elements in $p$-ordering of $S_j$'s (not present in the  already computed $p$-ordering). The $p$-ordering of translated $S_j$'s is computed recursively (Observation~\ref{thm:translate_theorem}).

While merging the $p$-orderings on each of the $S_i$'s, at each step we need to extract and remove the element with the minimum $p$-value over all $S_j$'s and replace it with the next element from the $p$-ordering on the same set $S_j$. Naively, it would need to find the minimum over all $p$ number of elements taking $\ot{(p)}$ time. Instead, we use min heap data structure, using only $\ot{(\log p)}$ time for extraction and insertion of elements. 

Each node of the min-heap($H$) consists of the following values, 
\begin{enumerate}
    \item $p\_value$: contains $p$-value of the element when added to $p$-ordering,
    \item $set$: contains the index of the set $S_0,S_1,...,S_{p-1}$ element belongs to, and
    \item $value$: contains the value of the element.
\end{enumerate}

These values are used to preserve the properties of the data structures used. 
With above intuition in mind, we develop Algorithm~\ref{alg:MYALG} to find the $p$-ordering on a subset of $\Z$.
\begin{algorithm}[ht]
\caption{Find p-ordering}
\label{alg:MYALG}
\begin{algorithmic}[1]
\Procedure{\merge{S_0,S_1,...,S_{p-1}}}{}
\State $S \gets [\ ]$
\For{$i \in [0,1,...,p-1]$}
\For{$j \in [0,...,len(S_{i})-1]$}
\State $S_{i}[j].set \gets i$
\EndFor
\EndFor
\State $i_0,i_1,i_2,...i_{p-1} \gets (0,0,...,0)$
\State $H \gets \createminheap{node=\{S_0[i_0],S_1[i_1],...,S_{p-1}[i_{p-1}]\}, key=p\_value}$
\While{H.IsEmpty()!=true}
\State $a \gets \extractmin{H}$
\State $j \gets a.set$
\If{$i_j < len(S_j)$}
\State $i_j \gets i_j + 1$
\State $\insertheap{H}{S_j[i_j]}$
\EndIf
\State $S \gets a$
\EndWhile
\State \textbf{return} $S$
\EndProcedure
\Procedure{\findpordering{S}}{}
\If{length(S)==1}
\State $S[0].p\_value \gets 1$
\State \textbf{Return} $S$
\EndIf
\State $S_0,S_1,...,S_{p-1} \gets ([\ ],[\ ],...,[\ ])$
\For{$i \in S$}
\State $S_{i.value\bmod{p}}.append(i)$
\EndFor
\For{$i \in [0,1,...,p-1]$}
\State $S_i \gets \findpordering{(S_i-i)/p}$
\For{$j \in [0,...,len(S_i)-1]$}
\State $S_i[j].value \gets p*S_i[j].value+i$
\State $S_i[j].p\_value \gets S_i[j].p\_value+j$
\EndFor
\EndFor
\State $S \gets \merge{S_0,S_1,...,S_{p-1}}$
\State \textbf{return} $S$
\EndProcedure
\end{algorithmic}
\algcomment{
In Algorithm~\ref{alg:MYALG}, we use a sorted list $\mathcal{I}$ of non-empty $S_i$'s, and only iterate over $\mathcal{I}$ in steps 3-5, 23-28. Hence, decreasing the time complexities of these loops. We can create/update the list  $\mathcal{I}$ in the loop at steps 21-22.}
\end{algorithm}
\subsection{Proof of Theorem~\ref{thm:main-thm-p-ordering}}
To prove the correctness of Algorithm~\ref{alg:MYALG}, we need two results: $\merge{ }$ procedure works and valuation is computed correctly in the algorithm. 

\begin{theorem}[Correctness of \merge{ }] \label{merge_algorithm}
In Algorithm~\ref{alg:MYALG}, given $S$ be a subset of integers, let for $k \in \{0,1,...,p-1\}$, $S_k = \{s\in S \mid s \equiv k\pmod{p}\}$, then given a $p$-ordering on each of the $S_k$'s,  \merge{S_0,S_1,...,S_{p-1}} gives a valid $p$-ordering on $S$. 
\end{theorem}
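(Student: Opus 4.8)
\emph{Proof plan.} The argument will be an induction on the number of elements that the main loop of \merge{ } has output, with the induction hypothesis that the partial list produced so far is a valid prefix of some $p$-ordering of $S$; running the loop to exhaustion then gives a full $p$-ordering (it also emits every element of $S$ exactly once, being a $p$-way merge). The one structural fact driving everything is Observation~\ref{thm:merge}: if $a$ lies in the residue class $S_j$ and $a_\ell\notin S_j$ has already been placed, then $a-a_\ell$ is a unit, so $v_p\big((a-a_0)\cdots(a-a_{i-1})\big)$ equals $v_p$ of the same product restricted to the already-placed elements of $S_j$. Hence the $p$-value of a candidate in the partially built $p$-ordering of $S$ coincides with its $p$-value inside $S_j$ alone, which is exactly what the heap key (the $p\_value$ field) records.

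I would run the induction with the following invariant: after $i$ extractions producing $a_0,\dots,a_{i-1}$, (i) $a_0,\dots,a_{i-1}$ is a prefix of a $p$-ordering of $S$; (ii) for each $j$, the elements of $a_0,\dots,a_{i-1}$ lying in $S_j$ are precisely a prefix $b^{(j)}_0,\dots,b^{(j)}_{m_j-1}$ of the input $p$-ordering of $S_j$; and (iii) the heap holds, for each $j$ with $m_j<|S_j|$, exactly the element $b^{(j)}_{m_j}$, keyed by its stored $p\_value$, equal to $1+v_p\big((b^{(j)}_{m_j}-b^{(j)}_0)\cdots(b^{(j)}_{m_j}-b^{(j)}_{m_j-1})\big)$ (so the key is $1$ when $m_j=0$). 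Invariant (ii) holds because the heap never exposes anything but the current front element of each $S_j$, so a class's elements leave the heap in its own $p$-ordering order; invariant (iii) is what the $value$/$p\_value$ rewriting in \findpordering{ } maintains, using the scaling identity of Observation~\ref{thm:translate_theorem}: the map $x\mapsto px+j$ multiplies every relevant difference by $p$, which adds $j$ to the valuation of a product of $j$ such differences --- this is the ``$+j$'' shift in the code --- while the base-case assignment $1$ is $1+v_p(\text{empty product})$.

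For the inductive step, fix $i\ge 1$. By Observation~\ref{thm:merge} and invariant (ii), for each $j$ and each not-yet-placed $x\in S_j$ we have $v_p\big((x-a_0)\cdots(x-a_{i-1})\big)=v_p\big((x-b^{(j)}_0)\cdots(x-b^{(j)}_{m_j-1})\big)$, and by the defining minimality property of the $p$-ordering of $S_j$ this quantity, over the remaining elements of $S_j$, is minimized by $x=b^{(j)}_{m_j}$, where it equals $(\text{key of }b^{(j)}_{m_j})-1$ --- including the case $m_j=0$, where both sides are $0$. Since $S=\bigcup_j S_j$, the global minimum of $v_p\big((x-a_0)\cdots(x-a_{i-1})\big)$ over $x\in S\setminus\{a_0,\dots,a_{i-1}\}$ is attained by the heap element of least key, i.e. by $a_i=\extractmin{H}$; thus $a_0,\dots,a_i$ is still a valid prefix, and advancing the pointer of the class that $a_i$ came from and re-inserting its new front restores invariant (iii). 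The base case $i=0$ needs no minimality --- the first element of a $p$-ordering is arbitrary --- so any extracted element is admissible.

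The step I expect to be the main obstacle is invariant (iii): checking that the $p\_value$ carried up through the recursion into \merge{ } is, at the instant of extraction, exactly the within-class $p$-value of the front element of $S_j$ relative to the $m_j$ elements of $S_j$ already emitted. This means lining up the recursion's bookkeeping --- the transform $x\mapsto px+j$ and its effect on product valuations, together with the convention that $p\_value$ is one more than the valuation --- with the count $m_j$ that the merge loop implicitly maintains, and handling the boundary cases $m_j=0$ and $|S_j|=1$ correctly. Everything else is routine: correctness of the $p$-way min-heap merge (one \extractmin{H} and at most one re-insertion per element, each in $\ot(\log p)$ time) and the handling of empty residue classes via the sorted index list $\mathcal{I}$.
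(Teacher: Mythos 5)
Your proposal is correct and follows essentially the same route as the paper: both hinge on Observation~\ref{thm:merge} to reduce a candidate's $p$-value to its within-class $p$-value, then combine the minimality property of each class's given $p$-ordering with the heap's extract-min to conclude the globally chosen element has least $p$-value at every step (the paper phrases this as a two-case contradiction, you as a direct induction with invariants). Your invariant~(iii) additionally verifies that the stored $p\_value$ keys (valuation plus one, maintained by the $+j$ shift) are correct, which the paper separates out into Theorem~\ref{p_value_preserve_theorem}; this is a packaging difference, not a gap.
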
 
\begin{proof}[Proof outline]
We start with $p$-orderings on each of the non-empty sets $(S_0,S_1,...,S_{p-1})$, and create a heap taking the first element of each of these $p$-ordering. 
At each successive step, we pick the element in the heap with minimum $p$-value to add to the $p$-ordering, and insert the next element from the corresponding $S_j$ to the heap.

We know that the valuation of any element in the combined $p$-ordering is going to be equal to their valuation in the $p$-ordering over the set $S_j$ containing them (by Observation~\ref{thm:merge}). If we show that at each step the element chosen has the least valuation out of all the elements left \merge{ } works correctly. We prove this by getting a contradiction if any element other than the ones obtained from the min heap is selected by showing the p\_value will be greater than what we get from \merge{ }.\QEDA

The details of the proof can be found in Appendix \ref{apx:merge_algorithm}.
\end{proof}

\begin{theorem}[Correctness of valuations] \label{p_value_preserve_theorem}
In Algorithm~\ref{alg:MYALG}, let $S$ be a subset of integers, then \findpordering{S} gives a valid $p$-values for all elements of $S$.
\end{theorem}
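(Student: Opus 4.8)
The plan is to prove, by induction, the combined statement that \findpordering{S} returns a \emph{valid} $p$-ordering of $S$ \emph{and} that the $p\_value$ field attached to its $i$-th element $a_i$ records the $p$-value of $a_i$ at the step it is inserted, i.e.\ (up to the $+1$ offset fixed by the base case) $v_p\big((a_i-a_0)(a_i-a_1)\cdots(a_i-a_{i-1})\big)$. The induction is on the exponent $k$ for which every element of $S$ lies below $p^k$; the base cases are the sets with $|S|\le 1$, which in particular covers $k=0$ since then every element is $0$. If $|S|=1$, the returned sequence is that single element, the empty product of differences equals $1$, there is nothing to minimize, and the algorithm assigns the corresponding constant $p\_value$, so the claim holds.

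For the inductive step assume $|S|\ge 2$ and that the claim holds for every set with elements below $p^{k-1}$. The algorithm partitions $S$ into residue classes $S_0,\dots,S_{p-1}$ modulo $p$ and recurses on $T_i:=(S_i-i)/p$, whose elements are below $p^{k-1}$. By the induction hypothesis each recursive call returns a valid $p$-ordering $b^{(i)}_0,b^{(i)}_1,\dots$ of $T_i$ with correct $p\_value$ fields. Next I analyse the translation loop: it replaces $b^{(i)}_j$ by $a^{(i)}_j:=i+p\,b^{(i)}_j$, which recovers exactly $S_i$, and it increases the $p\_value$ by the index $j$. Since $a^{(i)}_j-a^{(i)}_l=p\,\big(b^{(i)}_j-b^{(i)}_l\big)$,
\[
v_p\!\Big(\prod_{l<j}\big(a^{(i)}_j-a^{(i)}_l\big)\Big)=j+v_p\!\Big(\prod_{l<j}\big(b^{(i)}_j-b^{(i)}_l\big)\Big).
\]
Because the shift by $j$ is uniform over all candidates at step $j$, the minimiser is unchanged, so $a^{(i)}_0,a^{(i)}_1,\dots$ is a valid $p$-ordering of $S_i$ (this is Observation~\ref{thm:translate_theorem}), and the displayed identity shows the updated $p\_value$ of $a^{(i)}_j$ is precisely its $p$-value within $S_i$.

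Finally the algorithm calls \merge{S_0,\dots,S_{p-1}}. By Theorem~\ref{merge_algorithm}, from valid $p$-orderings of the $S_i$ this returns a valid $p$-ordering of $S$; moreover \merge{} emits the elements of each $S_i$ in their given order, so for every $a\in S$ the predecessors of $a$ in the merged sequence that are congruent to $a$ modulo $p$ are exactly its predecessors inside $S_{a\bmod p}$. By Observation~\ref{thm:merge} the remaining predecessors differ from $a$ by a $p$-adic unit and do not change the valuation, hence the $p$-value of $a$ in the merged ordering equals its $p$-value inside $S_{a\bmod p}$, which \merge{} carries over unchanged into the $p\_value$ field. This establishes the combined claim for $S$ and completes the induction; the theorem is its second half.

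I expect the translation step to be the main obstacle: one must check simultaneously that scaling a $p$-ordering of $T_i$ by $p$ and shifting by $i$ yields a $p$-ordering of $S_i$, and that the ``$+j$'' update to $p\_value$ is exactly right, which hinges on $v_p(p\cdot x)=1+v_p(x)$ together with the fact that the position index $j$ counts precisely the number of predecessors of $a^{(i)}_j$ lying in its own residue class. A secondary point worth stating carefully is well-foundedness of the recursion (recursing on the exponent $k$, with $|S|\le 1$ as the base case), since a residue class need not strictly decrease in size from one level to the next.
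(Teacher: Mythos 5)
Your proof is correct and follows essentially the same route as the paper's: recurse on the residue classes modulo $p$, correct the valuations returned by the recursive calls by the index $j$ within the class (the content of Observation~\ref{thm:p_value_translate_theorem}), and use Observation~\ref{thm:merge} together with the fact that \merge{ } preserves the relative order within each $S_i$ to conclude that the stored $p$-values remain correct after merging. The only real difference is that you induct on the exponent $k$ rather than on $|S|$, which automatically disposes of the case where all elements fall into a single residue class, whereas the paper handles that case by a separate and somewhat informal argument (its ``Case 2'') that the recursion eventually splits the set.
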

\begin{proof}[Proof outline]
The proof requires two parts: \merge{ } preserves valuation and changes in the valuation due to \emph{translation} does not induce errors. 
\begin{itemize}
    \item[\textbullet] To prove that \merge{ } preserves valuation, we make use of the fact that the combined $p$-ordering after merge has the individual $p$-orderings as a sub-sequence. Hence, the valuation of each element in the combined $p$-ordering is going to be equal to the valuation in the individual $p$-ordering (by Observation~\ref{thm:merge}). Hence, \merge{ } preserves valuations.
    \item[\textbullet] We show that the change in valuations due to translation (Step 24) are corrected (Step 27). This is easy to show by just updating the valuation according to Observation~\ref{thm:p_value_translate_theorem}.
\end{itemize}  
Hence, valuations are correct maintained throughout the algorithm. \QEDA

The details of the proof can be found in Appendix~\ref{apx:p_value_preserve_theorem}.
\end{proof}
Using the above two theorems, we prove the correctness of Algorithm~\ref{alg:MYALG}.

\begin{proof}[Proof of Theorem~\ref{thm:main-thm-p-ordering}] 
For the base case, if $S$ is a singleton, then the $p$-ordering over it is just a single element which is also what \findpordering{S} gives. 
Let \findpordering{ } works for $|S|<k$, if we show it works for $|S|=k$, then by induction, \findpordering{ } works for sets of arbitrary sizes.

Let $|S|=k$, then when we break the set into $S_0, S_1,...,S_{p-1}$ (Steps 20-22), either all element belong in a single $S_i$ or get distributed into multiple sets. We can argue that if all the elements fall into the same group, then when we keep calling recursion (Step 24), after some point set breaks into multiple $S_i$'s. Since, by Observation~\ref{thm:translate_theorem}, we know that the $p$-ordering on reduced elements is preserved, we'll get the correct $p$-ordering on the original set. Hence, we only need to prove this for the later case.

Since all the element of the set $S_i$ follow $\forall y \in S_i$, $y\equiv i\bmod{p}$, hence $\forall y \in S_i$, $p \mid (y-i)$, this implies $(S_i-i)/p \subset \Z$. Hence, \findpordering{(S_i-i)/p} gives a $p$-ordering on $(S_i-i)/p$ with the correct valuations associated with each element (Theorem~\ref{p_value_preserve_theorem}).

From Observation~\ref{thm:translate_theorem}, we know that if $(a_0,a_1,...)$ be a $p$-ordering on some set $A$, then $(p*a_0+i,p*a_1+i,...)$ be a $p$-ordering on $p*A+i$. Since, \findpordering{(S_i-i)/p} is a $p$-ordering on $(S_i-i)/p$, then $p*\findpordering{(S_i-i)/p}+x$ is a $p$-ordering on $S_i$ (Step 26).

Next, since we have valid $p$-orderings on $S_0,S_1,...,S_{p-1}$, \merge{S_0,S_1,...,S_{p-1}} returns a valid $p$-ordering on $S$ (Theorem \ref{merge_algorithm}).

By induction, our algorithm returns a valid $p$-ordering on any subset of integers. 

If each element of $S$ is less than $p^k$, then $p$-ordering on set $S$ requires $\ot(nk\log p)$ time (Theorem~\ref{thm:time-algo-1} of Appendix~\ref{apx:algo1-time}).
\QEDA
\end{proof}


    \section{Algorithm to find $p$-ordering on a set of representative roots}
\label{sec:algo_rep_roots}

The notion of representative roots (Definition~\ref{def:rep-root}) allows us to represent an exponentially large subset of $\Zpk$ succinctly. Further imposing a few simple conditions on this representation, namely the minimal representation (Definition~\ref{def:minimal_definition}), our subset is represented in a unique way (Theorem~\ref{thm:uniqueness_of_representation}). A natural question arises, can we efficiently find a $p$-ordering given a set in terms of representative roots? 

In this section we show that the answer is affirmative by constructing an efficient algorithm in terms of the size of the succinct representation.
\begin{theorem}
\label{thm:main-thm-rep-root}
Given a set $S \subset \Zpk$, for a prime $p$ and an integer $k$, that can be represented in terms of $d$ representative roots, we can efficiently find a $p$-ordering of length $n$ for $S$ in $\ot(d^2k\log p + nk \log p + np)$ time.
\end{theorem}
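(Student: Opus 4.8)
The plan is to lift the recursive, digit-by-digit strategy of Algorithm~\ref{alg:MYALG} to the succinct input, using two structural facts. First, by Observation~\ref{thm:intersect_theorem} two representative roots are either disjoint or nested, so after a preprocessing pass we may assume the input is the minimal (hence pairwise disjoint) representation $S=\bigsqcup_{i=1}^{d} C_i$ with $C_i=\beta_i+p^{k_i}\ast$. Second, a \emph{single} coset $C_i$ has an obvious $p$-ordering, namely $\beta_i,\ \beta_i+p^{k_i},\ \beta_i+2p^{k_i},\dots$; applying the translation rule (Observation~\ref{thm:translate_theorem}) and the valuation rule (Observation~\ref{thm:p_value_translate_theorem}) to the trivial $p$-ordering $0,1,2,\dots$ of $\Z$ gives a closed form for its $p$-value at position $j$ (essentially $v_p(j!)$ shifted by $j\cdot k_i$). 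Consequently we never recurse \emph{inside} a coset, and the whole algorithm is Algorithm~\ref{alg:MYALG} run over a tree whose leaves are the $d$ cosets.

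\textbf{Preprocessing ($\ot(d^2k\log p)$).} Compare the $d$ representative roots pairwise: delete any root contained in another (Observation~\ref{thm:intersect_theorem}), and for each surviving pair record the depth at which the two cosets first differ, i.e.\ $\min\{v_p(\beta_i-\beta_j),k_i,k_j\}$. From these $O(d^2)$ numbers build the compressed trie $T$ whose $d$ leaves are the cosets $C_i$ and whose internal (branching) nodes are their pairwise divergence points; annotate every edge with the number $\ell$ of digit-levels it compresses and the digit block to be prepended. Then $T$ has $O(d)$ nodes and edge-lengths summing to $O(dk)$.

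\textbf{Merging over $T$ ($\ot(nk\log p + np)$).} Run \textsc{Merge} of Algorithm~\ref{alg:MYALG} along $T$, top-down and on demand, asking the root for only the first $n$ elements. A leaf emits the next element of its coset's $p$-ordering together with its closed-form $p$-value in $O(1)$ amortized time (maintaining $v_p(j!)$ incrementally). Along an edge of length $\ell$ the two children it joins lie, by construction, in distinct residue classes modulo $p$, so Observation~\ref{thm:merge} applies and the translation/scaling rules update the \emph{value} and \emph{$p$-value} of each emitted element in $O(1)$ per level (or once, per emitted element, if the $\ell$ levels are folded together). At a branching node (at most $p$, and at most $d$, active children) we keep a min-heap keyed by $p$-value exactly as in \textsc{Merge}, paying $\ot(\log p)$ per emission. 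Correctness is then immediate from Theorems~\ref{merge_algorithm} and~\ref{p_value_preserve_theorem} applied node-by-node along $T$, together with the closed-form $p$-sequence of a coset. For the running time: the $p$-ordering produced at the root has the orderings at all of its descendants as subsequences, so each of the $n$ requested elements is emitted at most once per level of $T$, i.e.\ $O(k)$ times; the extra lookahead elements held in heaps sum to only $O(dp)$ over all branching nodes. Hence the total number of emissions is $\ot(nk+dk)$, the heap work is $\ot(nk\log p)$, the coset and edge bookkeeping is $\ot(nk)$, and the $\ot(np)$ slot covers the heap-free variant in which each step scans the $\le p$ currently active branches; adding the preprocessing gives $\ot(d^2k\log p+nk\log p+np)$.

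The step I expect to be the real obstacle is exactly this amortized analysis. A careless implementation re-runs a length-up-to-$n$ merge at each of the $\Theta(dk)$ positions of $T$ and costs $\Theta(ndk)$, so the argument must pin down that the \emph{demand} pushed into a subtree stays controlled — that requesting $n$ elements at the root forces only $n+O(\text{number of edges below})$ emissions across any frontier — and must handle the degenerate case in which $S$ starts inside a single residue class modulo $p$, giving $T$ a long non-branching stem; this is precisely the subtlety already met in the proof of Theorem~\ref{thm:main-thm-p-ordering}. The remaining translation- and valuation-updates along edges and at leaves are routine consequences of Observations~\ref{thm:translate_theorem},~\ref{thm:p_value_translate_theorem} and~\ref{thm:merge}.
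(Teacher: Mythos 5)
Your proposal is correct in outline, but it takes a genuinely different route from the paper. The paper never recurses and never builds a trie: after the same $\ot(d^2k\log p)$ pairwise preprocessing, Algorithm~\ref{alg:rep_alg} keeps a flat array of $d$ running valuations, one per representative root, storing the $p$-value that the \emph{next} element of that root would have if chosen now. The key lemma is Observation~\ref{thm:rep_root_interaction}: for two disjoint representative roots the valuation of any cross difference equals $v_p(\beta_i-\beta_j)$, independent of which elements are taken; hence appending an element from root $i$ raises every other root's pending valuation by the constant $corr(i,j)$, while root $i$'s own pending valuation grows by $k_i$ times the increment of $v_p(j!)$ along its internal $p$-ordering (Observation~\ref{thm:merge_rep_root}), precomputed once in $\ot(nk\log p)$. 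Each of the $n$ steps is then a plain $O(d)$ scan-and-update, giving $\ot(d^2k\log p+nk\log p+nd)$ with no heap, no laziness and no amortization (the $np$ in the theorem statement corresponds to your heap-free variant; the appendix actually proves $nd$). Your compressed-trie scheme instead lifts Algorithm~\ref{alg:MYALG} wholesale: correctness rests on Observation~\ref{thm:merge} together with the translation/scaling rules applied node by node, and the per-step cost is $O(\mathrm{depth}\cdot\log p)$ rather than $O(d)$, which can even be preferable when $d\gg k$, at the price of exactly the demand-driven amortization you flag. If you pursue your route, note three details: the lookahead held in heaps is in fact only $O(d)$ elements in total (one per child edge), not $O(dp)$; full $k\log p$-bit values should be reconstructed only once per output element at the root (a leaf can emit the pair consisting of its index and the position $j$), otherwise the per-edge lifts cost an extra factor of $k$; and the heap key passed across an edge of length $\ell$ must include the position-dependent shift $j\cdot\ell$ of the $p$-value, the analogue of step 27 of Algorithm~\ref{alg:MYALG}, exactly as you sketch.
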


The proof of the theorem follows by constructing an algorithm to find the $p$-ordering given a set in representative root notation. We can assume that the representative roots are disjoint. If they are not, one representative root will be contained in another (Observation~\ref{thm:intersect_theorem}), all such occurrences can be deleted in $\ot(d^2k\log p)$ time.

\paragraph{Outline of the algorithm}
The important observation is, we already have a natural $p$-ordering defined on a representative root (Observation~\ref{thm:merge_rep_root}).
Since a $p$-ordering on each representative root is already known, we just need to find a way to merge them. Merging was easy in Algorithm~\ref{alg:MYALG} because progress in any one of the $p$-ordering of an $S_j$ did not effect the $p$-value of an element outside $S_j$. However, in this case the exact increase in the $p$-value is known by Observation~\ref{thm:rep_root_interaction}.

Let $d$ be the number of representative roots, we maintain an array of size $d$ to keep the valuations that we would get whenever we add the next element from a representative root. To update the $i$-th value of this array when an element from the $j$-th representative root is added, we simply add the value $v_p(\beta_i-\beta_j)$($i\neq j$). Hence, at each step we find the minimum value in this array (in $\ot($d$)$) and add it to the combined $p$-ordering (in $\ot($1$)$) and we update all the $d$ values in this array (in $\ot($d$)$). We repeat this process till we get the $p$-ordering of the desired length.

With the above intuition in mind, we develop Algorithm~\ref{alg:rep_alg} to find the $p$-ordering of length $n$ on a subset $S$ of $\Zpk$ given in representative root representation. \\

\begin{algorithm}[!ht]
\caption{Find p-ordering from minimal notation}
\label{alg:rep_alg}
\begin{algorithmic}[1]
\Procedure{\correlate{S}}{}
\State $Corr \gets [0]_{len(S)\times len(S)}$
\State $Corr \gets [0]_{len(S)\times len(S)}$
\For{$j \in [1,...,len(S)]$}
\For{$k \in [1,...,len(S)]$}
\State $Corr[j][k] \gets v_p(S[j].value-S[k].value)$
\EndFor
\EndFor
\State \textbf{return} $Corr$
\EndProcedure
\Procedure{\pexpincrease{n}}{}
\State $v_p(1) \gets 1$
\For{$j \in [1,...,n]$}
\State $v_p((j+1)!) \gets v_p(j+1)\ast v_p(j!)$
\State $p\_exponent[j] \gets v_p((j+1)!)-v_p(j!)$
\EndFor
\State \textbf{return} $p\_exponent$
\EndProcedure
\Procedure{\findpordering{S, n}}{}
\State $corr \gets \correlate{S}$
\State $increase \gets \pexpincrease{n}$
\State $valuations \gets [0]_{|S|}$
\State $p\_ordering \gets \{\}$
\State $i_1, i_2 \dots i_{|S|} \gets 0$
\For{$i \in \{1, 2, \dots n\}$}
    \State $min \gets \infty$
    \State $min\_index \gets \infty$
    \For{$j \in [1,...,len(S)]$}
        \If{$valuations[j]<min$}
            \State $min \gets valuations[j]$
            \State $min\_index \gets j$
        \EndIf
    \EndFor
    \State $p\_ordering.append(S[min\_index].value+p^{S[min\_index].exponent}\ast i_{min\_index})$
    \For{$j \in [1,...,len(S)]$}
        \If{$j=min\_index$}
            \State $valuations[j] \gets valuations[j] + S[min\_index].exponent\ast increase[i_j]$
        \Else
            \State $valuations[j] \gets valuations[j] + corr(min\_index,j)$
        \EndIf
    \EndFor
    \State $i_{min\_index} \gets i_{min\_index} + 1$
\EndFor
\State \textbf{return} $p\_ordering$

\EndProcedure
\end{algorithmic}
\end{algorithm}
\subsection{Proof of Correctness}
To prove the correctness of our algorithm, we first prove that valuations are correctly maintained.
\begin{theorem}
\label{thm:correct_valuation_array}
In Algorithm~\ref{alg:rep_alg}, \findpordering{S,n} maintains the correct valuations on the set $S$ of representative roots in $valuations$ at every iteration of the loop.
\end{theorem}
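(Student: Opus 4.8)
The plan is to prove Theorem~\ref{thm:correct_valuation_array} by induction on the loop iteration $i$, showing that just before the $i$-th iteration, the entry $valuations[j]$ equals the true $p$-value (valuation) that element $S[j].value + p^{S[j].exponent}\ast i_j$ would have if appended to the current $p$-ordering, i.e., the valuation of the product of its differences with all already-chosen elements. First I would set up the invariant carefully: let $a_0, a_1, \dots, a_{m-1}$ be the partial $p$-ordering constructed so far (where $m = i-1$), let $i_j$ be the current pointer into the $j$-th representative root $r_j = \beta_j + p^{k_j}\ast$, and let $x_j = \beta_j + p^{k_j}\ast i_j$ be the candidate next element from $r_j$. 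The claim is $valuations[j] = v_p\!\left(\prod_{t=0}^{m-1}(x_j - a_t)\right)$ at the top of iteration $i$.

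The base case $i=1$ is immediate since the array is initialized to $0$ and the empty product has valuation $0$. For the inductive step, suppose the invariant holds before iteration $i$; the algorithm selects $min\_index$, the index $\mu$ minimizing $valuations[\mu]$, appends $x_\mu$ to the $p$-ordering, and then updates every entry. For $j \neq \mu$, the new partial ordering gains the single element $x_\mu = a_m$, so the true valuation of the candidate from $r_j$ (whose candidate element is unchanged, since $i_j$ is not incremented) picks up exactly the term $v_p(x_j - x_\mu)$. Here I would invoke Observation~\ref{thm:rep_root_interaction}: since $r_i$ and $r_j$ are disjoint representative roots, for any $x_j \in r_j$ and $x_\mu \in r_\mu$ we have $v_p(x_j - x_\mu) = v_p(\beta_j - \beta_\mu) = corr(\mu, j)$, independent of $i_j, i_\mu$. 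Thus the update $valuations[j] \mathrel{+}= corr(\mu,j)$ is exactly right. For $j = \mu$, the pointer $i_\mu$ is advanced to $i_\mu + 1$, so the candidate changes from $x_\mu$ to $x_\mu' = \beta_\mu + p^{k_\mu}\ast(i_\mu+1)$, and we must account both for the newly added element $a_m = x_\mu$ and for the change of candidate. Here I would use Observation~\ref{thm:merge_rep_root}: the natural $p$-ordering on a single representative root $\beta_\mu + p^{k_\mu}\ast$ has its $p$-sequence scaled by $p^{k_\mu}$ relative to the $p$-sequence of $[p^{k-k_\mu}]$ (equivalently of $\Z$), and the valuation increment at step $i_\mu$ equals $k_\mu$ times the increment in $v_p$ of the factorial sequence, which is precisely $S[\mu].exponent \ast increase[i_\mu]$ as computed by \pexpincrease{}. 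I would also need to verify (via Observation~\ref{thm:merge}, or the argument in Section~\ref{sec:algo_set}) that the differences $x_\mu - a_t$ for $a_t$ lying in representative roots other than $r_\mu$ contribute nothing new to the $\mu$-entry beyond what was already accumulated — but those contributions were folded in during earlier iterations when those $a_t$ were themselves added, and the key point is that the valuation of the full product telescopes correctly because each difference is counted exactly once, at the iteration where the second of the two elements is appended.

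The main obstacle I anticipate is the bookkeeping for the diagonal update, i.e., making precise why the valuation increment for the $\mu$-entry when $i_\mu$ advances depends only on $i_\mu$ (through the factorial/$p$-sequence increase) and on $k_\mu$, and not on the interleaving history of which elements from other representative roots were added in between. The resolution is that the product defining $valuations[\mu]$ factors as (product of differences with elements in $r_\mu$) $\times$ (product of differences with elements outside $r_\mu$); the second factor's valuation was being incremented correctly by the off-diagonal updates in all prior iterations, and the first factor's valuation, by Observation~\ref{thm:merge_rep_root}, increases by exactly $S[\mu].exponent \ast increase[i_\mu]$ when the $i_\mu$-th element of $r_\mu$'s natural $p$-ordering is added. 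I would make this split explicit in the invariant (writing $valuations[j] = A_j + B_j$ with $A_j$ the intra-root part and $B_j$ the inter-root part) to keep the two update rules cleanly separated, then conclude by induction that the invariant holds through iteration $n$, which is exactly the statement of the theorem.
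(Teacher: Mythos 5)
Your proposal is correct relative to the paper and follows essentially the same route as the paper's appendix proof: induction over the loop iterations with the invariant that $valuations[j]$ decomposes into the inter-root part $\sum_{t\neq j} i_t\, v_p(\beta_t-\beta_j)$, updated via Observation~\ref{thm:rep_root_interaction} exactly as in Steps 33--34, and the intra-root part, updated via Observation~\ref{thm:merge_rep_root} together with the factorial increments supplied by \pexpincrease{n}. Even your explicit split $valuations[j]=A_j+B_j$ and the telescoping argument for why each cross-difference is counted once are just a more verbose restatement of the paper's inductive bookkeeping, so the two proofs coincide in substance.
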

\begin{proof}[Proof outline]
All elements have 0 valuation at the beginning (Step 17). Also, adding an element from the $i$-th representative root increases the valuation of the $j$-th representative root by $corr(i,j)$ (Step 33) for $i\neq j$ (Observation~\ref{thm:rep_root_interaction}). The increase for the next element of $i$ is exponent times the increase in $p$-sequence of $\Zpk$ (Step 30) (Observation~\ref{thm:merge_rep_root}). So, we correctly update the valuations array in each iteration. \QEDA

A detailed proof can be found in Appendix \ref{apx:valuation-rep-root}.
\end{proof}
\begin{proof}[Proof of Theorem~\ref{thm:main-thm-rep-root}]
By the definition of $p$-ordering we know that at each iteration if we choose the element with the least valuation then we get a valid $p$-ordering. By Theorem~\ref{thm:correct_valuation_array}, we know that $valuations$ array has the correct next valuations. Hence, to find the representative root with gives the least valuation, we find the index of the minimum element in $valuations$.

To add the next value to the $p$-ordering, we use Observation~\ref{thm:merge_rep_root} to find the next element in the $p$-ordering on the representative root. Hence, the element added has the least valuation. Hence, \findpordering{S,n} returns the correct $p$-ordering.

 If $S$ contains $d$ representative roots of $\Zpk$, then Algorithm~\ref{alg:rep_alg} finds $p$-ordering on $S$ up to length $n$ in $\ot{(d^2k\log{p}+nk\log{p}+nd)}$ time (Theorem~\ref{thm:time-rep-algo} of Appendix~\ref{apx:rep-algo-time}). \QEDA
\end{proof}

    \section{Structure of root sets for a given $k$}
\label{sec:root-set-structure}
We know that $\Zpk$ is not a unique factorization domain. In fact, even small degree polynomials can have exponentially large number of roots.
Interestingly, not all subsets of $\Zpk$ can be a root set (Definition~\ref{def:root-set}). 
Dearden and Metzger~\cite{DM97} showed that $R$ is a root-set iff $R_j=\{r\in R \mid r \equiv j\pmod{p}\}$ is also a root-set for all $j\in [p]$. The size and structure of $R_j$ is symmetric for all $j$. Let $N_{p^k}$ be the number of possible $R_j$'s, then total number of possible root-sets become $(N_{p^k})^p$~\cite{DM97}. In this section, we discuss and describe all possible $R_j$'s in $Z_{p^2}$, $Z_{p^3}$ and $Z_{p^4}$.


Let $S_j = \{s\in \Zpk \mid s \equiv j\pmod{p}\}$, we take the following approach to find all possible root-sets $R_j$'s. Given an $R_j$, define $R=\{(r-j)/p: r\in R_j\}$ to be the translated copy. We show that if $R$ contains at least $k$ many distinct residue classes $\bmod p$, then $R_j = S_j$ (Observation~\ref{thm:minimal-complete-tree}). We exhaustively cover all the other cases, when $R$ contains less than $k$ residue classes (possible because $k$ is small). 

\subsection{$k=2$}\label{sec:struct-2k}
We find that the root set $R_j$ can only take the following structures (details in Appendix~\ref{apx:struct-2k}).
\begin{enumerate}
    \item $\bm{1}$ root-set is the complete sub-tree under $j$ (more than 1 residue class), equivalently \\
    \vspace*{-\baselineskip}$$R_j=j+p\cdot\ast.$$
    \item $\bm{p}$ root-sets are a single element congruent to $j \bmod{p}$ (1 residue class), equivalently \\
    \vspace*{-\baselineskip}$$R_j=j+p\cdot\alpha\text{, for } \alpha\in [p].$$
    \item $\bm{1}$ root-set is empty (no residue classes), equivalently \\
    \vspace*{-\baselineskip} $$R_j=\emptyset.$$ 
\end{enumerate}
Hence, total root-sets,  $N_{p^2}=p+2$.
\subsection{$k=3$}\label{sec:struct-3k}

Similar to $k=2$, the root sets $R_j$ can only take the following structure (details in Appendix~\ref{apx:struct-3k}).
\begin{enumerate}
    \item $\bm{1}$ root-set is the complete sub-tree, equivalently \\
    \vspace*{-\baselineskip}$$R_j=j+p\cdot\ast.$$
    \item $\bm{\frac{p(p-1)}{2}}$ root-sets are the union of 2 sub-trees different at the level $p^1$, equivalently \\
    \vspace*{-\baselineskip} $$R_j=(j+p\cdot\alpha_1+p^2\ast)\cup(j+p\cdot\alpha_2+p^2\ast)\text{, for } \alpha_1\neq\alpha_2\in [p].$$ 
    \item $\bm{p}$ root-sets are a sub-tree at the level $p^1$, equivalently \\
    \vspace*{-\baselineskip}$$R_j=j+p\cdot\alpha+p^2\ast\text{, for } \alpha\in [p].$$ 
    \item $\bm{p^2}$ root-sets are a single element congruent to $j \bmod{p}$, equivalently \\
    \vspace*{-\baselineskip}$$R_j=j+p\cdot\alpha_1+p^2\cdot\alpha_2\text{, for } \alpha_1,\alpha_2\in [p].$$ 
    \item $\bm{1}$ root-set is empty, equivalently \\
    \vspace*{-\baselineskip}$$R_j=\emptyset.$$ 
\end{enumerate}
Hence, total root-sets,  $N_{p^3}=\frac{3p^2+p+4}{2}$.

\subsection{$k=4$}\label{sec:struct-4k}

Similar to $k=2,3$, the root sets $R_j$ can only take the following structure (details in Appendix~\ref{apx:struct-4k}).
\begin{enumerate}
    \item $\bm{1}$ root-set is the complete sub-tree under $j$, equivalently \\
    \vspace*{-\baselineskip}$$R_j=j+p\cdot\ast.$$ 
    \item $\bm{\frac{p(p-1)(p-2)}{6}}$ root-sets under $j$ are the union of 3 sub-trees different at the level $p^1$, equivalently \\
    \vspace*{-\baselineskip}$$R_j=(j+p\cdot\alpha_1+p^2\ast)\cup(j+p\cdot\alpha_2+p^2\ast)\cup(j+p\cdot\alpha_3+p^2\ast)\text{, for } \alpha_1\neq\alpha_2\neq\alpha_3\in [p].$$
    \item $\bm{\frac{p(p-1)}{2}}$ root-sets are the union of 2 sub-trees different at the level $p^1$, equivalently \\
    \vspace*{-\baselineskip}$$R_j=(j+p\cdot\alpha_1+p^2\ast)\cup(j+p\cdot\alpha_2+p^2\ast)\text{, for } \alpha_1\neq\alpha_2\in [p].$$ 
    \item $\bm{p}$ root-sets are a sub-tree at the level $p^1$, equivalently \\
    \vspace*{-\baselineskip}$$R_j=j+p\cdot\alpha+p^2\ast\text{, for } \alpha\in [p].$$ 
    \item $\bm{\frac{p^3(p-1)}{2}}$ root-sets are a union of 2 sub-trees at the level $p^2$ that are different at the level $p^1$, equivalently \\
    \vspace*{-\baselineskip}$$R_j=(j+p\cdot\alpha_1+p^2\cdot\beta_1+p^3\ast)\cup(j+p\cdot\alpha_2+p^2\cdot\beta_2+p^3\ast)\text{, for } \alpha_1\neq\alpha_2\,\beta_1,\beta_2\in [p].$$ 
    \item $\bm{p^2}$ root-sets are a sub-tree at the level $p^2$, equivalently\\
    \vspace*{-\baselineskip}$$R_j=j+p\cdot\alpha_1+p^2\cdot\alpha_2+p^3\cdot\ast\text{, for } \alpha_1,\alpha_2\in [p].$$ 
    \item $\bm{p^3}$ root-sets are a single element congruent to $j \bmod{p}$, equivalently\\
    \vspace*{-\baselineskip}$$R_j=j+p\cdot\alpha_1+p^2\cdot\alpha_2+p^3\cdot\alpha_3\text{, for } \alpha_1,\alpha_2,\alpha_3\in [p].$$ 
    \item $\bm{1}$ root-set is empty, equivalently\\
    \vspace*{-\baselineskip}$$R_j=\emptyset.$$ 
\end{enumerate}
Hence, total root-sets,  $N_{p^4}=\frac{3p^4+4p^3+6p^2+5p+12}{6}$.
    
    \bibliographystyle{alpha}
    \bibliography{references.bib}    
\newpage
    \begin{subappendices}
\renewcommand{\thesection}{\Alph{section}}
\section{Naive Algorithm to find $p$-ordering} \label{apx:naive-algo}
Given a set of integers $S \subseteq \Zpk$ we can find a $p$-ordering by naively checking the element which will give us the minimum valuation with respect to $p$ for the given expression as in Definition~\ref{def:$p$-ordering}. After we have already chosen $\{a_0, a_1, \dots a_{t-1}\}$ we choose the next element from $S \backslash \{a_0, a_1, \dots a_{t-1}\} $ such that $v_p((x-a_0)(x-a_1)\dots (x-a_{t-1}))$ is minimum. The naive approach given in \cite{Bha97} iterates over all $x$ in $S \backslash \{a_0, a_1, \dots a_{t-1}\}$ and adds the element to the $p$-ordering which gives the minimum valuation.

\paragraph{Time Complexity:}
Every time we keep on adding another element to the already existing $p$-ordering, say of length $t$. For any given value of $x \in S \backslash \{a_0, a_1, \dots a_{t-1}\}$, calculating $x-a_i$ and multiplying for every $0 \geq i < t$ takes $\mathcal{O}((n-t)t)$ operations in $\mathbb{Z}$ and since each of them are less than $p^k$ this takes $\mathcal{O}((n-t)t k\log p) \leq \mathbb{O}(n^2k\log p)$. So repeating this $n$ times gives us the time complexity $\mathcal{O}(n^3k\log p)$.
\section{Observations about representative roots and $p$-sequences/$p$-orderings}
\label{apx:observations}
\subsection{Representative roots}
\begin{observation} \label{thm:intersect_theorem}
Given any two representative roots $A_1=\beta_1+p^{k_1}\ast$ and $A_2=\beta_2+p^{k_2}\ast$, then either $A_1 \subseteq A_2$ or $A_2 \subseteq A_1$ or $A_1 \cap A_2 = \emptyset$.
\end{observation}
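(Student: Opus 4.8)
The plan is to translate the set-theoretic statement into one about congruences modulo powers of $p$. First I would record the concrete description of a representative root: by Definition~\ref{def:rep-root}, $\beta + p^i\ast$ is exactly the set of $x\in\Zpk$ with $x\equiv\beta\pmod{p^i}$, i.e. a congruence class modulo $p^i$ inside $\Zpk$. So $A_1$ is a congruence class mod $p^{k_1}$ and $A_2$ a congruence class mod $p^{k_2}$, and the claim is that two such classes are either nested or disjoint — the usual ``laminar family'' property of the chain of ideals $(p)\supseteq(p^2)\supseteq\cdots$.

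Next I would assume, without loss of generality, that $k_1\le k_2$, and treat the case $A_1\cap A_2\neq\emptyset$ (if the intersection is empty we are already in the third alternative). Pick any $c\in A_1\cap A_2$. Then $c\equiv\beta_1\pmod{p^{k_1}}$ and $c\equiv\beta_2\pmod{p^{k_2}}$; since $k_1\le k_2$ we have $p^{k_1}\mid p^{k_2}$, so the second congruence also gives $c\equiv\beta_2\pmod{p^{k_1}}$. Combining, $\beta_1\equiv\beta_2\pmod{p^{k_1}}$.

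Finally I would use this to show $A_2\subseteq A_1$: for an arbitrary $x\in A_2$ we have $x\equiv\beta_2\pmod{p^{k_2}}$, hence $x\equiv\beta_2\pmod{p^{k_1}}$, and with $\beta_1\equiv\beta_2\pmod{p^{k_1}}$ this yields $x\equiv\beta_1\pmod{p^{k_1}}$, i.e. $x\in A_1$. Thus $A_2\subseteq A_1$, which is the first or second alternative depending on which of $k_1,k_2$ is larger. There is no real obstacle here; the only points requiring a little care are fixing the orientation of the inequality $k_1\le k_2$ (which determines the direction of containment) and making sure the divisibility $p^{k_1}\mid p^{k_2}$ is invoked explicitly when passing between the two moduli.
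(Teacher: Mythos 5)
Your proposal is correct and follows essentially the same route as the paper's proof: pick a common element of $A_1\cap A_2$, order the exponents $k_1\le k_2$, and conclude that the class with the larger exponent is contained in the other, the only cosmetic difference being that you argue via congruences $x\equiv\beta\pmod{p^i}$ while the paper re-centers both representative roots at the common element (and splits off $k_1=k_2$ as a separate case, which your single argument subsumes).
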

\begin{proof}


Let $A_1=\beta_1+p^{k_1}\ast$ and $A_2=\beta_2+p^{k_2}\ast$ be two root sets such that $A_1\cap A_2 = \tilde{A}\neq \emptyset$, then we show that $\tilde{A} = A_1$ or $\tilde{A} = A_2$. 
\paragraph{Case 1:} Let $k_1=k_2=x$. Let there is some element $a \in \tilde{A}$. Then $a \in A_1$ and $a \in A_2$, hence, $A_1$ can be defined as $A_1=a+p^x\ast$ and similarly $A_2=a+p^x\ast$. Hence, $\tilde{A} = A_1 = A_2$.
\paragraph{Case 2:} Let $k_1\neq k_2$, then without loss of generality, let's assume that $k_1<k_2$. Let there is some element $a \in \tilde{A}$. Then, $A_1$ can be defined as $A_1=a+p^{k_1}\ast$ and similarly $A_2=a+p^{k_2}\ast$.

Let $b \in A_2$, then $b = a + p^{k_2}y$ for some $y$. Now, we know that the elements of $A_1$ are of the form $a+p^{k_1}\ast$. Hence, putting $\ast = p^{k_2-k_1}y$, we get $a+p^{k_1}\cdot (p^{k_2-k_1}y)=b$, hence $b\in A_1$. Hence, $A_2\subset A_1$. Hence, $\tilde{A}=A_2$. \QEDA
\end{proof}

\begin{observation}
\label{thm:rep_root_interaction} Let $a_1\in\beta_1+p^{k_1}\ast$ and $a_2\in\beta_2+p^{k_2}\ast$ be any 2 elements of the representative roots $\beta_1+p^{k_1}\ast$ and $\beta_2+p^{k_2}\ast$ respectively, for $\beta \neq \alpha_2$, then, $$w_p(a_1-a_2)=w_p(\beta_1-\beta_2).$$
\end{observation}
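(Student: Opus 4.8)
The plan is to compute the $p$-adic valuation of $a_1 - a_2$ directly, writing each $a_i$ in terms of the base point $\beta_i$ of its representative root. First I would write $a_1 = \beta_1 + p^{k_1} y_1$ and $a_2 = \beta_2 + p^{k_2} y_2$ for some integers $y_1, y_2$ (coming from the defining sets of the two representative roots), so that
\[
a_1 - a_2 = (\beta_1 - \beta_2) + p^{k_1} y_1 - p^{k_2} y_2 .
\]
The key point is that the hypothesis $\beta_1 \not\equiv \beta_2 \pmod p$ — which is what the condition ``$\beta_1 \neq \beta_2$'' must mean here, since otherwise the two representative roots would be comparable and the statement would be about them being distinct mod $p$ (note $k_1, k_2 \geq 1$ in a representative root, as the $\ast$ ranges over $\Zp{k-i-1}$ with $i \geq 1$) — forces $v_p(\beta_1 - \beta_2) = 0$, i.e. $w_p(\beta_1 - \beta_2) = 1$. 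So I would split into the relevant case.

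The main case is $v_p(\beta_1 - \beta_2) = 0$: then $\beta_1 - \beta_2$ is a unit mod $p$, while both $p^{k_1} y_1$ and $p^{k_2} y_2$ are divisible by $p$ (since $k_1, k_2 \geq 1$). Hence $a_1 - a_2 \equiv \beta_1 - \beta_2 \not\equiv 0 \pmod p$, so $v_p(a_1 - a_2) = 0 = v_p(\beta_1 - \beta_2)$, giving $w_p(a_1 - a_2) = 1 = w_p(\beta_1 - \beta_2)$. For completeness I would also record the general principle being used — if $v_p(u) < v_p(v)$ then $v_p(u+v) = v_p(u)$ — which is the standard ultrametric inequality and handles the addition cleanly; here $u = \beta_1 - \beta_2$ has valuation $0$ and $v = p^{k_1}y_1 - p^{k_2}y_2$ has valuation $\geq 1 > 0$.

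I do not expect a genuine obstacle: the only subtlety is interpreting the hypothesis ``$\beta_1 \neq \beta_2$'' correctly as inequality modulo $p$ (equivalently, that the two representative roots lie in different residue classes mod $p$), and confirming that every representative root as defined in Definition~\ref{def:rep-root} has exponent at least $1$ so that $p \mid p^{k_i} y_i$. Once that is pinned down, the argument is a one-line application of the non-archimedean valuation property. The bound on $i$ in the definition (the $\ast$ ranges over $\Zp{k-i-1}$, requiring $i \geq 1$) is exactly what makes this work, and I would point to it explicitly.
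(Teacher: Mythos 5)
There is a genuine gap, and it is in how you read the hypothesis. The condition written as ``$\beta\neq\alpha_2$'' is a typo for the two representative roots being \emph{distinct} (in the algorithm they are additionally assumed disjoint, i.e.\ neither contains the other), which by Observation~\ref{thm:intersect_theorem} amounts to $v_p(\beta_1-\beta_2)<\min(k_1,k_2)$ --- \emph{not} to $\beta_1\not\equiv\beta_2\pmod p$. Your reading forces $v_p(\beta_1-\beta_2)=0$, so your argument only proves the special case $w_p(a_1-a_2)=1=w_p(\beta_1-\beta_2)$. That special case is not what the observation is for: in the proof of Theorem~\ref{thm:correct_valuation_array} (and in \correlate{S} of Algorithm~\ref{alg:rep_alg}) the quantity $v_p(\beta_t-\beta_j)$ is added as a genuinely positive correction for disjoint representative roots that agree modulo $p$ but differ at a higher digit --- e.g.\ $\beta_1+p^{3}\ast$ and $(\beta_1+p)+p^{3}\ast$, where $v_p(\beta_1-\beta_2)=1$. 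Under your interpretation the statement would be vacuous exactly where it is used, so the proposal as written does not establish the result the paper needs.

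The good news is that your ultrametric step is the right tool and the fix is immediate, after which your argument becomes essentially the paper's. Set $t=v_p(\beta_1-\beta_2)$ and note that distinctness/disjointness gives $t<\min(k_1,k_2)$ (if all of the first $\min(k_1,k_2)$ digits agreed, one root would contain the other). Writing $a_1-a_2=(\beta_1-\beta_2)+\bigl(p^{k_1}y_1-p^{k_2}y_2\bigr)$, the first summand has valuation exactly $t$ and the second has valuation at least $\min(k_1,k_2)>t$, so $v_p(a_1-a_2)=t$ and $w_p(a_1-a_2)=w_p(\beta_1-\beta_2)$. This is precisely the computation in the paper's proof, which takes $k_1\le k_2$ without loss of generality, writes $\beta_1-\beta_2=p^{t}(a+pb)$ with $p\nmid a$, and derives a contradiction from $p^{t+1}\mid a_1-a_2$; so once the hypothesis is corrected your route and the paper's coincide.
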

\begin{proof}
We have $2$ representative roots of the form $\beta_1 + p^{k_1}*$ and $\beta_2 + p^{k_2}*$. WLOG let us assume that $k_1 \leq k_2$ and $\beta_1 \in \Zp{k_1}, \beta_2 \in \Zp{k_2}$. \\
We definitely have that these two are different representative roots. So if the first $k_1$ elements of the $p$-adic expansion of $\beta_2$ are equal then the second representative root will be contained in the first, as the $*$ portion of the first contains all the values of the second representative root as well as its subset. So for them to be different representative roots, $p^{k_1} \nmid \beta_2-\beta_1$. 
Let $v_p(\beta_1-\beta_2) = t$, $t < k_1$, then for any value $y_1, y_2$ in the respective $*$ sets, we will have $p^t | (\beta_1 +p^{k_1}y_1)-(\beta_2 + p^{k_2}y_2)$. Note that since $k_1 > v_p(\beta_1-\beta_2)$ we have $v_p((\beta_1 +p^{k_1}y_1)-(\beta_2 + p^{k_2}y_2)) \geq v_p(\beta_1-\beta_2)$.\\ Now, since $p^t | \beta_1-\beta_2$ and $p^{t+1} \nmid \beta_1 - \beta_2$ we can write $\beta_1-\beta_2 = p^t(a+pb)$ for $a, b \in \Zpk$ where $a \in \{1, 2, \dots p-1\}$. This implies that  if $p^{t+1} | (\beta_1 +p^{k_1}y_1)-(\beta_2 + p^{k_2}y_2)$, it means $p^{t+1} | p^t(a+pb) + p^{k_1}(y_1 - p^{k_2-k_2}y_2) \implies p | a + p (\cdots)$ which can not be true as $p \nmid a$. So for any value of $y_1, y_2$ in the respective $*$ sets of their corresponding representative roots, we will have $v_p((\beta_1 +p^{k_1}y_1)-(\beta_2 + p^{k_2}y_2)) = v_p(\beta_1 - \beta_2)$.\\
Conversely if $\exists y_1, y_2$ such that $v_p((\beta_1 +p^{k_1}y_1)-(\beta_2 + p^{k_2}y_2)) > v_p(\beta_1-\beta_2)$, let $l = v_p((\beta_1 +p^{k_1}y_1)-(\beta_2 + p^{k_2}y_2)) \geq k_1$. Then we have $v_p(\beta_1-\beta_2) \leq l-1$. So if $p^l | (\beta_1 +p^{k_1}y_1)-(\beta_2 + p^{k_2}y_2) \implies p^l | \beta_1-\beta_2$ (as $l \leq k_1$). This is a contradiction as $l > v_p(\beta_1-\beta_2)$.\\
This completes the proof of Observation~\ref{thm:rep_root_interaction}. \QEDA
\end{proof}

\subsection{$p$-ordering and $p$-sequence}
\begin{observation} [\cite{Mau01}] \label{thm:merge}
Let $S$ be a subset of integers, let $S_j=\{s\in S \mid s \equiv j\pmod{p}\}$ for $j=0,1,...,p-1$, then for any $x\in \Z$, s.t. $x\equiv j\pmod{p}$, 
\begin{equation}
    w_p\left(\displaystyle \prod_{a_i \in S} (x-a_i)\right) = w_p\left(\displaystyle \prod_{a_i \in S_j} (x-a_i)\right).
\end{equation}
\end{observation}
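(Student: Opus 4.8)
\textbf{Proof proposal for Observation~\ref{thm:merge}.}

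The plan is to show that the only factors $(x-a_i)$ that can contribute any power of $p$ to the product are those with $a_i \equiv j \pmod p$, so that discarding the rest changes nothing. First I would split the product over $S$ into the product over $S_j$ and the product over $S \setminus S_j$, using that $\{S_0,\dots,S_{p-1}\}$ partitions $S$; concretely,
\[
w_p\!\left(\prod_{a_i \in S}(x-a_i)\right) = w_p\!\left(\prod_{a_i \in S_j}(x-a_i)\right)\cdot w_p\!\left(\prod_{a_i \in S\setminus S_j}(x-a_i)\right),
\]
since $w_p$ is multiplicative (it is $p$ raised to the additive valuation $v_p$, and $v_p(ab)=v_p(a)+v_p(b)$).

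The key step is then to argue that the second factor equals $1$. Take any $a_i \in S \setminus S_j$: by definition $a_i \equiv j' \pmod p$ for some $j' \neq j$, hence $x - a_i \equiv j - j' \not\equiv 0 \pmod p$, so $p \nmid (x-a_i)$, i.e. $v_p(x-a_i) = 0$ and $w_p(x-a_i)=1$. Summing these zero valuations over all $a_i \in S\setminus S_j$ gives $v_p\big(\prod_{a_i\in S\setminus S_j}(x-a_i)\big) = 0$, so that product has $w_p$-value $1$. Substituting back yields the claimed equality. One should also handle the degenerate case where $x \in S$: if $x = a_i$ for some $i$, then necessarily $a_i \in S_j$ (as $x \equiv j$), the factor $(x-a_i)$ is $0$ on both sides, and both sides are $\infty$ by the convention in Definition~\ref{def:valuation}, so the identity still holds; if $x \notin S$ both sides are finite and the argument above applies verbatim.

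I do not expect any real obstacle here — the statement is essentially just multiplicativity of the valuation plus the elementary fact that a difference of two integers in distinct residue classes mod $p$ is a unit mod $p$. The only things to be careful about are stating the $w_p$-multiplicativity cleanly (including the $\infty$ conventions) and noting that the partition hypothesis guarantees every element of $S$ lands in exactly one $S_{j'}$, so nothing is double-counted or omitted when the product is regrouped.
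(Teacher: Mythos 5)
Your proposal is correct: splitting the product via the partition $S=\bigcup_{j'} S_{j'}$, using multiplicativity of $v_p$ (hence of $w_p$), and observing that $a_i\not\equiv j\pmod p$ forces $v_p(x-a_i)=0$ is exactly the right argument, and your handling of the degenerate case $x\in S$ (both sides $\infty$ by the convention $w_p(0)=\infty$) is a sensible extra precaution. Note that the paper itself gives no proof of this observation --- it is simply quoted from Maulik \cite{Mau01} --- so there is no in-paper argument to compare against; your elementary residue-class argument is the standard one and would serve as a self-contained justification.
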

\begin{observation} \label{thm:translate_theorem}
Let $S$ be a subset of integers, let ($a_0,a_1,a_2,...$) be a $p$-ordering on $S$, then
\begin{enumerate}
    \item For any $x \in \Z$, ($a_0+x,a_1+x,a_2+x,...$) is a $p$-ordering on $S+x$.
    \item For any $x \in \Z$, ($x*a_0,x*a_1,x*a_2,...$) is a $p$-ordering on $x*S$.
\end{enumerate}
\end{observation}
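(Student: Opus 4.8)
\textbf{Proof proposal for Observation~\ref{thm:translate_theorem}.}

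The plan is to prove both parts directly from the inductive definition of $p$-ordering (Definition~\ref{def:$p$-ordering}), using the elementary fact that the valuation $v_p$ is unaffected by an additive shift and shifts additively under multiplication by a fixed nonzero element. The key algebraic identities I would record first are: for any $a, b, x \in \Z$ we have $(a+x)-(b+x) = a-b$, hence $v_p\big((a+x)-(b+x)\big) = v_p(a-b)$; and $(x\ast a)-(x\ast b) = x\ast(a-b)$, hence $v_p\big((x\ast a)-(x\ast b)\big) = v_p(x) + v_p(a-b)$. The first identity says the difference structure of $S$ is literally preserved under translation; the second says it is preserved up to a global additive constant $v_p(x)$ per factor, which does not change which element is the minimizer.

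For part~1, I would argue by induction on the length of the constructed ordering that $(a_0+x, a_1+x, \dots)$ is a valid $p$-ordering on $S+x$. The base case is immediate: any element of $S+x$ may be chosen first, and $a_0+x$ is such an element. For the inductive step, suppose $a_0+x, \dots, a_{i-1}+x$ is a valid initial segment of some $p$-ordering on $S+x$; I must show $a_i+x$ is a legal next choice, i.e. that it minimizes $v_p\big(\prod_{t<i}(y - (a_t+x))\big)$ over $y \in (S+x)\setminus\{a_0+x,\dots,a_{i-1}+x\}$. Writing $y = a+x$ for the corresponding $a \in S \setminus \{a_0,\dots,a_{i-1}\}$, the first identity gives $v_p\big(\prod_{t<i}((a+x)-(a_t+x))\big) = v_p\big(\prod_{t<i}(a-a_t)\big)$, so the quantity to be minimized over $S+x$ equals, termwise under the bijection $a \mapsto a+x$, the quantity minimized over $S$ in the construction of the original $p$-ordering. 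Since $a_i$ was a minimizer there, $a_i+x$ is a minimizer here. Part~2 is identical in structure: under the bijection $a \mapsto x\ast a$ (using $x \neq 0$ so this is injective on residues; if $x = 0$ the statement is vacuous or trivial), the second identity shows $v_p\big(\prod_{t<i}((x\ast a)-(x\ast a_t))\big) = i\cdot v_p(x) + v_p\big(\prod_{t<i}(a-a_t)\big)$, and adding the constant $i\cdot v_p(x)$ (which depends only on the step $i$, not on the candidate $a$) does not change the argmin; hence $x\ast a_i$ remains a valid next choice.

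I do not expect a genuine obstacle here — the statement is essentially a change-of-variables observation — but the one point deserving care is the bookkeeping that the set of \emph{available} elements transports correctly under the bijection at each step (so that "minimize over $S\setminus\{a_0,\dots,a_{i-1}\}$" maps exactly to "minimize over $(S+x)\setminus\{a_0+x,\dots,a_{i-1}+x\}$"), and, in part~2, noting that since $p$-orderings are only interesting for $x$ coprime-or-not but in any case one should treat $x=0$ separately and observe $v_p(x)$ may be positive, so the shift $i \cdot v_p(x)$ is finite and constant across candidates at a fixed step. With these checks the induction closes and both parts follow.
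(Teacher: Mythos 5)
Your proof is correct. Note that the paper itself states Observation~\ref{thm:translate_theorem} without any proof (it is treated as an elementary known fact, used later in Observation~\ref{thm:merge_rep_root} and in the correctness arguments), so there is no authorial argument to compare against; your change-of-variables induction --- translation leaves every difference, hence every valuation, unchanged, while multiplication by $x\neq 0$ adds the candidate-independent constant $i\cdot v_p(x)$ at step $i$ and so preserves the argmin --- is exactly the standard justification the authors are implicitly relying on, and your care about the bijection of available elements and the degenerate case $x=0$ is appropriate.
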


\begin{observation} \label{thm:p_value_translate_theorem}
Let $S$ be a subset of integers, let $(a_0,a_1,a_2,...)$ be a $p$-ordering on $S$. Then, for any $x \in \Z$
\begin{enumerate}
    \item $v_p(x*S,k)=v_p(S,k)+k\cdot w_p(x)$.
    \item $v_p(S+x,k)=v_p(S,k)$.
\end{enumerate}
\end{observation}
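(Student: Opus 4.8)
The final statement is Observation~\ref{thm:p_value_translate_theorem}, which has two parts about how $p$-sequences transform under scaling and translation of the underlying set. Let me sketch a proof.

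\textbf{Plan of proof.} The strategy is to fix a $p$-ordering $(a_0, a_1, a_2, \dots)$ on $S$, use Observation~\ref{thm:translate_theorem} to obtain corresponding $p$-orderings on $x \ast S$ and on $S + x$, and then compute the associated $p$-sequences directly from the definition $v_p(S,i) = w_p\bigl(\prod_{t<i}(a_i - a_t)\bigr)$. Since Theorem~\ref{thm:p_uniqueness} guarantees the $p$-sequence is independent of the chosen $p$-ordering, it suffices to do this computation for one convenient $p$-ordering on the transformed set.

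\textbf{Part 2 (translation).} By Observation~\ref{thm:translate_theorem}, $(a_0 + x, a_1 + x, \dots)$ is a $p$-ordering on $S + x$. Its $i$-th $p$-sequence term (for $i > 0$) is
\[
w_p\!\left(\prod_{t=0}^{i-1}\bigl((a_i + x) - (a_t + x)\bigr)\right) = w_p\!\left(\prod_{t=0}^{i-1}(a_i - a_t)\right) = v_p(S,i),
\]
since the $+x$ terms cancel. The $i = 0$ case is $0$ on both sides. So $v_p(S+x, i) = v_p(S, i)$ for all $i$.

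\textbf{Part 1 (scaling).} By Observation~\ref{thm:translate_theorem}, $(x a_0, x a_1, \dots)$ is a $p$-ordering on $x \ast S$. For $i > 0$, its $i$-th $p$-sequence term is
\[
w_p\!\left(\prod_{t=0}^{i-1}(x a_i - x a_t)\right) = w_p\!\left(x^i \prod_{t=0}^{i-1}(a_i - a_t)\right) = w_p(x)^i \cdot w_p\!\left(\prod_{t=0}^{i-1}(a_i - a_t)\right),
\]
using multiplicativity of $w_p$ (equivalently, additivity of the valuation $v_p$). The right-hand side is $w_p(x)^i \cdot v_p(S,i)$. Taking valuations (i.e. writing everything as $v_p$ rather than $w_p$) this reads $v_p(x \ast S, i) = v_p(S,i) + i \cdot v_p(x)$, which matches the claimed identity once one reads $k \cdot w_p(x)$ in the statement as the exponent form $v_p(S,k) + k\,v_p(x)$ (the statement's notation is slightly informal here). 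The $i = 0$ case is again trivial. One should also note the edge case $x = 0$, where $x \ast S = \{0\}$ is a singleton and both sides are consistent with the convention $w_p(0) = \infty$; and the case where $S$ itself contains repeated structure is irrelevant since $S$ is a set.

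\textbf{Main obstacle.} There is no real obstacle — this is a routine cancellation/multiplicativity argument. The only mild subtlety is bookkeeping between the $w_p$ (prime-power) form and the $v_p$ (exponent) form of the statements, and making sure the appeal to Theorem~\ref{thm:p_uniqueness} (uniqueness of the $p$-sequence) is invoked so that computing with the specific transformed $p$-ordering suffices. One should double-check that Observation~\ref{thm:translate_theorem} is used legitimately — it asserts the transformed sequence is a valid $p$-ordering, which is exactly what lets us read off the $p$-sequence from it.
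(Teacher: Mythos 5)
Your proof is correct and is essentially the argument the paper leaves implicit: the paper states this observation without proof, and your route --- take the transformed ordering guaranteed by Observation~\ref{thm:translate_theorem}, compute its $p$-sequence directly from the definition (cancellation of $+x$ for $S+x$, multiplicativity of the valuation for $x\ast S$), and invoke Theorem~\ref{thm:p_uniqueness} so that computing with one convenient ordering suffices --- is exactly the intended one. Your reading of the scaling formula in exponent form, $v_p(x\ast S,k)=v_p(S,k)+k\cdot v_p(x)$ (i.e.\ with $v_p(x)$ rather than the literal $w_p(x)$ in the statement), is also right, as it matches how the paper actually uses the observation, e.g.\ $v_p(S_i-i,k)=v_p((S_i-i)/p,k)+k$ in the proof of Theorem~\ref{p_value_preserve_theorem} and the update in Step~27 of Algorithm~\ref{alg:MYALG}.
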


\begin{observation}
\label{thm:merge_rep_root}
Let $(a_0,a_1,...)$ be a $p$-ordering on $\Zpk$, then $(\beta+a_0*p^j,\beta+a_1*p^j,\beta+a_2*p^j,...)$ is a $p$-ordering on $\beta+p^j\ast$.
\end{observation}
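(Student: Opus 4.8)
The plan is to read the observation directly off the two transformation rules of Observation~\ref{thm:translate_theorem}, applied one after the other: first scale the ground set by $p^j$, then translate it by $\beta$.

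First I would start from a $p$-ordering $(a_0,a_1,a_2,\dots)$ of $\Zpk$ (taking integer representatives) and apply Observation~\ref{thm:translate_theorem}(2) with the scalar $x=p^j$. This immediately gives that $(p^j a_0,\,p^j a_1,\,p^j a_2,\dots)$ is a $p$-ordering of $p^j\cdot\Zpk$, which written out is exactly the ``$p^j\ast$'' set of the representative-root notation. At the level of valuations the step is transparent: every difference scales as $v_p(p^j a_i - p^j a_{i'}) = j + v_p(a_i - a_{i'})$, so at each stage the element minimizing the valuation of the running product is unchanged (every candidate's value is shifted by the same amount; cf.\ Observation~\ref{thm:p_value_translate_theorem}). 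Next I would apply Observation~\ref{thm:translate_theorem}(1) with the translate $x=\beta$ to the $p$-ordering $(p^j a_0,p^j a_1,\dots)$ just obtained; since adding a constant leaves all pairwise differences, hence all valuations, untouched, the result $(\beta+p^j a_0,\,\beta+p^j a_1,\,\beta+p^j a_2,\dots)$ is a $p$-ordering of $\beta+p^j\cdot\Zpk=\beta+p^j\ast$. That sequence is precisely the one in the statement, completing the proof.

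The one point to be careful about is the bookkeeping of index sets when everything is read inside a single ring: multiplication by $p^j$ is $p^j$-to-one on $\Zpk$, so if the source $p$-ordering is taken as a full length-$p^k$ sequence one should pass to its subsequence of distinct values, equivalently restrict to the initial segment of length $|\beta+p^j\ast|$, before applying the translation; when $\beta+p^j\ast$ is instead interpreted as a representative root sitting in the larger ring $\Zp{k+j}$, so that the ``$\ast$'' ranges over the same index set as for $\Zpk$, no truncation is needed and the two-line argument above applies verbatim. I expect this index-set bookkeeping, rather than anything substantive, to be the only delicate part of the proof.
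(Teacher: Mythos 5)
Your proof is correct and follows essentially the same route as the paper: the paper's own argument is a one-liner invoking Observation~\ref{thm:translate_theorem} (scaling by $p^j$, then translating by $\beta$) together with the natural $p$-ordering $0,1,2,\dots$ of $\Zpk$, which is exactly your two-step application of that observation. Your extra remark about the index-set bookkeeping (truncating to the size of $\beta+p^j\ast$, or reading the sequence in the larger ring) is a reasonable clarification of a point the paper leaves implicit, but it does not change the approach.
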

\begin{proof}
A simple proof of this theorem follows from Observation~\ref{thm:translate_theorem} and the fact that $1, 2, 3, \dots$ form an obvious $p$-ordering in $\Zpk$. \QEDA
\end{proof}
\section{Min-heap data structure}
\label{apx:heap}
A min-heap is a data structure in which each node has at most two children and exactly one parent node (except root, no parents). The defining property is that the key value of any node is equal or lesser than the key value of its children.

\begin{tikzpicture}
    \node[shape=circle,draw=black] (A) at (0,0) {3};
    \node[shape=circle,draw=black] (B) at (-1,-1) {5};
    \node[shape=circle,draw=black] (C) at (1, -1) {7};
    \node[shape=circle,draw=black] (D) at (-1.5, -2) {11};
    \node[shape=circle,draw=black] (E) at (-0.5, -2) {6};
    \node[shape=circle,draw=black] (F) at (1.5, -2) {8} ;
    \node[shape=circle, draw=black] (G) at (1, -3) {9};
    \node[shape=circle, draw=black] (H) at (2, -3) {10};

    \draw [-] (A) -- (B);
    \draw [-] (A) -- (C);
    \draw [-] (B) -- (D);
    \draw [-] (B) -- (E);
    \draw [-] (C) -- (F);
    \draw [-] (F) -- (G);
    \draw [-] (F) -- (H);
\end{tikzpicture}

We will use three standard functions on a min-heap with $n$ nodes~\cite{CLRS01}.
\begin{enumerate}
    \item \createminheap{S}: Takes a set $S$ as input and returns a min-heap with elements of $S$ as the nodes in $\ot{(n)}$.
    \item \extractmin{H}: Removes the element with the minimum key from the heap and rebalances the heap structure in $\ot{(\log(n))}$.
    \item \insertheap{H}{a}: Inserts the element $a$ into the heap $H$ in $\ot{(\log(n))}$.
\end{enumerate}
\section{Correctness and Complexity of Algorithm~\ref{alg:MYALG}}
\subsection{Proof of Theorem  \ref{merge_algorithm}}\label{apx:merge_algorithm}
Let $(a_0^k,a_1^k,...)$ be a $p$-ordering on each of the $S_k$'s.
We know that \merge{ } on $(S_0,S_1,...,S_{p-1})$ proceeds in such a way that elements are added to the heap in such a way that the element $a_l^k$ is added to the heap only after $\forall i<l, a_i^k$ have already been added to the $p$-ordering. Also, we know at any point, only one element from any $S_k$ can belong to the heap. 

We know that at any point, let $(a_0,a_1,...a_{k-1})$ be a $p$-ordering on $S$, then the next element $a_k$ in the $p$-ordering on $S$ if and only if $$w_p\left( \prod_{i \in \{0,1,...,k-1\}} (a_k-a_i)\right) = \min_{x \in S\setminus \{a_0,a_1,...a_{k-1}\}}\left(w_p\left( \prod_{i \in \{0,1,...,k-1\}} (x-a_i)\right)\right).$$
We know that if at each point, our pick for the next element in the $p$-ordering satisfies this condition, the $p$-ordering we get is valid.

Lets say $(a_0,a_1,...,a_i)$ be the $p$-ordering we have till now. Let elements currently the elements $(a_0^k,a_1^k,...,a_{i_k-1}^k)$ of the given $p$-ordering on set $S_k$ are currently a part of the $p$-ordering. Let the elements $(a_{i_0}^0,a_{i_1}^1,...,a_{i_{p-1}}^{p-1})$ are a part of the min-heap. 

Let when we extract the min from the min-heap, we get some value $a_{i_k}^k$. If we show that $$w_p\left( \prod_{j \in \{0,1,...,i\}} (a_{i_k}^k-a_j)\right) = \min_{x \in S\setminus\{a_0,a_1,...a_{i}\}}\left(w_p\left( \prod_{j \in \{0,1,...,i\}} (x-a_j)\right)\right),$$
then we know that $a_{i_k}^k$ is a valid next element in the $p$-ordering and hence \merge{ } gives a correct $p$-ordering on $S$.

We prove this by contradiction. Let there is an element $x \in S\setminus\{a_0,a_1,..,a_{i},a_{i_k}^k\}$ such that $$w_p\left( \prod_{j \in \{0,1,...,i\}} (x-a_j)\right) < w_p\left( \prod_{j \in \{0,1,...,i\}} (a_{i_k}^k-a_j)\right).$$ Then, we have 2 cases, either $x \in S_k$, i.e. $x \equiv k \bmod{p} \equiv a_{i_k}^k \bmod{p}$ or $x \in S_l$ for some $l \neq k$, i.e. $x \equiv l \bmod{p} \not\equiv a_{i_k}^k \bmod{p}$.

\begin{description}

\item[Case 1: $x \in S_k$].
   
Our assumption is that $$w_p\left( \prod_{j \in \{0,1,...,i\}} (x-a_j)\right) < w_p\left( \prod_{j \in \{0,1,...,i\}} (a_{i_k}^k-a_j)\right).$$ We know that from our assumption that $S_k \cap (a_0,a_1,...,a_i) = (a_0^k,a_1^k,...,a_{i_k-1}^k)$. From Observation~\ref{thm:merge}, $$w_p\left( \prod_{j \in \{0,1,...,i\}} (a_{i_k}^k-a_j)\right) = w_p\left( \prod_{j \in \{0,1,...,i_k-1\}} (a_{i_k}^k-a_j^k)\right),$$ and $$w_p\left( \prod_{j \in \{0,1,...,i\}} (x-a_j)\right) = w_p\left( \prod_{j \in \{0,1,...,i_k-1\}} (x-a_j^k)\right).$$ Since, $(a_0^k,a_1^k,...,a_{i_k-1}^k,a_{i_k}^k,...)$ is a valid $p$-ordering on $S_k$, $$w_p\left( \prod_{j \in \{0,1,...,i\}} (a_{i_k}^k-a_j)\right) \leq w_p\left( \prod_{j \in \{0,1,...,i\}} (x-a_j)\right).$$ 
But this is a contradiction.

\item[Case 2: $x \in S_l$ for some $l \neq k$].

Our assumption is that $$w_p\left( \prod_{j \in \{0,1,...,i\}} (x-a_j)\right) < w_p\left( \prod_{j \in \{0,1,...,i\}} (a_{i_k}^k-a_j)\right).$$ Let the element belonging to the set $S_l$ in the heap is $a_{i_l}^l$. We know that from our assumption that $S_l \cap (a_0,a_1,...,a_i) = (a_0^l,a_1^l,...,a_{i_l-1}^l)$. From Observation~\ref{thm:merge}, $$w_p\left( \prod_{j \in \{0,1,...,i\}} (a_{i_l}^l-a_j)\right) = w_p\left( \prod_{j \in \{0,1,...,i_l-1\}} (a_{i_l}^l-a_j^l)\right),$$ and $$w_p\left( \prod_{j \in \{0,1,...,i\}} (x-a_j)\right) = w_p\left( \prod_{j \in \{0,1,...,i_l-1\}} (x-a_j^l)\right).$$ Since, $(a_0^l,a_1^l,...,a_{i_l-1}^l,a_{i_l}^l,...)$ is a valid $p$-ordering on $S_l$, $$w_p\left( \prod_{j \in \{0,1,...,i\}} (a_{i_l}^l-a_j)\right) \leq w_p\left( \prod_{j \in \{0,1,...,i\}} (x-a_j)\right).$$ Also, since both $a_{i_l}^l$ and $a_{i_k}^k$ were part of the heap but $ExtractMin()$ procedure returned $a_{i_k}^k$, $$w_p\left( \prod_{j \in \{0,1,...,i\}} (a_{i_k}^k-a_j)\right) \leq w_p\left( \prod_{j \in \{0,1,...,i\}} (a_{i_l}^l-a_j)\right).$$ Using the above two inequalities, $$w_p\left( \prod_{j \in \{0,1,...,i\}} (a_{i_k}^k-a_j)\right) \leq w_p\left( \prod_{j \in \{0,1,...,i\}} (x-a_j)\right).$$ 
But this is a contradiction.
\end{description}

Since, we arrive at a contradiction in both the cases, hence, our assumption must be wrong. Hence, $$w_p\left( \prod_{i \in \{0,1,...,k-1\}} (a_k-a_i)\right) = \min_{x \in S\setminus \{a_0,a_1,...a_{k-1}\}}\left(w_p\left( \prod_{i \in \{0,1,...,k-1\}} (x-a_i)\right)\right).$$ Hence, our procedure \merge{ } gives a valid $p$-ordering. \QEDA
\subsection{Proof of Theorem  \ref{p_value_preserve_theorem}}
\label{apx:p_value_preserve_theorem}
We prove this by induction on the size of $S$.

If $S$ is a singleton, then the $p$-ordering on $S$ is just that element. And hence the corresponding $p$-value is just $p^0=1$. \findpordering{S} sets this value to $1$ in step 18. Hence, our assumption is true for $|S| = 1$.

Let our assumption is true for $|S|<k$, if we can show it for $|S|=k$, then by induction, we know our assumption is true for sets of all sizes.

Let $|S|=k$, then when we break this set into smaller $S_0, S_1,...,S_{p-1}$ (Steps 21-22), either all element belong in a single $S_i$ or get distributed into multiple sets. We handle the two case separately.

\paragraph{Case 1:} Let $S$ breaks into smaller $S_0, S_1,...,S_{p-1}$.

In this case, we know all the $S_0, S_1,...,S_{p-1}$ have size less that $k$. Hence, the sizes of $(S_x-x)/p$ is also less than $k$ for all $x \in \{0,1,...,p-1\}$. Hence, we get the correct $p$-values for all elements when we call $Find\_P\_Ordering((S_x-x)/p)$ in step 24. 

From Theorem \ref{thm:p_value_translate_theorem}, we know that $v_p(S_i-i,k)=v_p((S_i-i)/p,k)+k$, hence we add $k$ to the $p$-values of all elements of the output(step 27). We know that $v_p(S_i,k)=v_p(S_i-i,k)$, hence, the $p$-values of each element are correct at the end of step 27.

Next, we show that \merge{ } preserves the $p$-values, we're done, since we know that \merge{ } doesn't update the $p$-values of any of the elements. Let an element $q$ is added at the $j^{th}$ position in the $p$-ordering output by \merge{ }. Let all the elements before this element are in the set $X$. Then, we know that the $p$-value of this element is $w_p\left(\displaystyle \prod_{a_i \in X} (q-a_i)\right)$. By Observation~\ref{thm:merge}, we know that this is equal to $w_p\left(\displaystyle \prod_{a_i \in X_{q\pmod{p}}} (q-a_i)\right)$. Since, merge doesn't re-order the $p$-orderings on any input $S_x$ while merging, we know that this is exactly the $p$-value of $q$ from before. Hence, \merge{ } preserves the $p$-values. 

Hence, the $p$-values at the end of \merge{ } are correct (step 28). Hence, \findpordering{S} gives the correct $p$-values.

\paragraph{Case 2:} Let all elements of $S$ go into a single $S_i$.

Since, we recursively keep calling \findpordering{\cdot} on the reduced set, we know at some point, we would reach case 1. As proven above, at this point, we would get the correct $p$-values. Hence, if we can show that given a correct $p$-values in step 24, \findpordering{\cdot} outputs the correct $p$-values, then by a recursive argument, this would output the correct $p$-values for any set of size $k$.

Let's say that all the elements of $S$ fall into some set $S_x$. We assume that \findpordering{(S_i-i)/p} outputs the correct $p$-values, then if we can prove that we get the correct $p$-values from $S$, then by the above argument, we are done.

From Theorem \ref{thm:p_value_translate_theorem}, we know that $v_p(S_i-i,k)=v_p((S_i-i)/p,k)+k$, hence we add $k$ to the $p$-values of all elements of the output (step 27). We know that $v_p(S_i,k)=v_p(S_i-i,k)$, hence, the $p$-values of each element are correct at the end of step 27.

Since, all the elements in $S$ are in just one $S_i$, \merge{ } acts as identity. Hence, the output at the end of Step 28 has the correct $p$-values. Hence, \findpordering{\cdot} gives the correct $p$-values for sets of size $k$.

Hence, by induction, \findpordering{\cdot} outputs the correct $p$-values on any subset of integers.  \QEDA
\subsection{Time complexity of Algorithm~\ref{alg:MYALG}}
\label{apx:algo1-time}
\begin{theorem}
\label{thm:time-algo-1}
Given a set $S \subset \Z$ of size $n$ and a prime $p$, such that for all elements $a\in S$, $a < p^k$ for some $k$, Algorithm~\ref{alg:MYALG} returns a $p$-ordering on $S$ in $\ot(nk\log p)$ time. 
\end{theorem}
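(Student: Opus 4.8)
The plan is to analyze the recursion tree of \findpordering{} and charge the work done at each node. The key structural fact is that \findpordering{} splits $S$ into the residue classes $S_0,\dots,S_{p-1}$ modulo $p$, recurses on each translated/scaled set $(S_i-i)/p$, and then calls \merge{}. Since each element of $S$ is bounded by $p^k$, after $k$ levels of peeling off a $p$-adic digit every element has been reduced to a single-element set, so the recursion depth is at most $k$. At each level, the sets at that level partition (a subset of) the original $n$ elements, so the total number of elements summed over all nodes at a fixed level is at most $n$, and over the whole tree at most $nk$.

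First I would bound the cost of a single call to \findpordering{S'} excluding recursive subcalls. Distributing the $|S'|$ elements into residue classes (Steps 21--22) costs $\ot(|S'|)$ arithmetic operations on numbers of size at most $p^k$, i.e. $\ot(|S'|\, k\log p)$ bit operations — though note that after the scaling at Step 26 the numbers at depth $t$ have size at most $p^{k-t}$, so more carefully the cost at depth $t$ is $\ot(|S'|(k-t)\log p)$; summing the crude bound $\ot(|S'| k\log p)$ over all nodes already gives $\ot(nk\log p)$. The translation/scaling updates at Steps 26--27 are again $\ot(|S'|)$ operations of the same size. The one subtlety is the loop at Steps 23--25 over $i\in[0,\dots,p-1]$: naively this is $\ot(p)$ per call regardless of $|S'|$, which would blow up the bound. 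This is exactly what the \texttt{algcomment} addresses: one keeps a sorted list $\mathcal{I}$ of the \emph{non-empty} $S_i$'s and iterates only over those. Since a non-empty $S_i$ contains at least one element, the number of non-empty classes at a node is at most $|S'|$, so iterating over $\mathcal{I}$ costs $\ot(|S'|)$, and maintaining $\mathcal{I}$ costs $\ot(|S'|\log p)$ when we bucket elements in Steps 21--22.

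Next I would bound the cost of \merge{S_0,\dots,S_{p-1}}. Building the initial heap over the non-empty classes costs $\ot(m)$ where $m\le|S'|$ is the number of non-empty classes. The while-loop runs once per element of $S'$, and each iteration does one \extractmin{} and at most one \insertheap{}, each $\ot(\log m)=\ot(\log p)$; the \texttt{set} relabeling in Steps 3--5 of \merge{} is $\ot(|S'|)$ using $\mathcal{I}$. So a \merge{} call on a set of size $s$ costs $\ot(s\log p)$, and there are no arithmetic operations on large integers inside \merge{} at all — it only compares precomputed $p\_value$ fields, which are integers of size $\ot(k\log p)$ as bit strings but whose comparison is $\ot(1)$ in the word model (or $\ot(\log k)$ bit operations, absorbed into $\ot$). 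Summing $\ot(s\log p)$ over all recursion nodes gives $\ot(nk\log p)$.

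Finally I would assemble: total cost $=\sum_{\text{nodes }v}\left(\ot(|S_v|\,k\log p) + \ot(|S_v|\log p)\right)$, and since $\sum_v |S_v| = \sum_{t=0}^{k-1}\sum_{v \text{ at depth } t}|S_v| \le \sum_{t=0}^{k-1} n = nk$ (using that depth is $\le k$ and each level's sets are disjoint subsets of $S$), this sums to $\ot(nk\cdot k\log p)$ — which is off by a factor of $k$. To get the claimed $\ot(nk\log p)$ I would instead use the refined per-node bound $\ot(|S_v|(k - \mathrm{depth}(v))\log p)$ for the arithmetic: then $\sum_v |S_v|(k-\mathrm{depth}(v)) \le \sum_{t=0}^{k-1}\sum_{v\text{ at depth }t} |S_v|\cdot(k-t) \le \sum_{t=0}^{k-1} n(k-t)$... which is $\ot(nk^2)$ again. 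The correct accounting is the other way: the numbers at depth $t$ have $\ot((k-t)\log p)$ bits, but we should charge each of the $n$ root-elements for the total work it participates in across all depths, namely $\sum_{t=0}^{k-1}\ot((k-t)\log p) = \ot(k^2\log p)$ per element — still $\ot(nk^2\log p)$. The main obstacle is therefore exactly this: getting $\ot(nk\log p)$ rather than $\ot(nk^2\log p)$ requires observing that the expensive arithmetic — subtracting and multiplying out $p$-adic numbers — is \emph{never performed} in this algorithm; the $p$-values are maintained incrementally by the additions at Step 27 (each an $\ot(\log k)$-bit increment, done $\ot(1)$ times per element per level, hence $\ot(k)$ times total per element, hence $\ot(k\log k) = \ot(k)$ bit operations per element) and by the scaling at Step 26 (multiply by $p$, add a digit: $\ot((k-t)\log p)$ bits but again, charging the final length, $\ot(k\log p)$ total per element). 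So the genuine per-element cost is $\ot(k\log p)$ for the digit manipulations plus $\ot(k\log p)$ for the $\ot(k)$ heap operations it triggers (one per level, $\ot(\log p)$ each), giving $\ot(nk\log p)$ overall. I expect the write-up to require care in stating precisely the word/bit model and in confirming that no step secretly multiplies two $\Omega(k\log p)$-bit numbers together.
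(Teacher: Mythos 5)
Your proposal is correct in the end and its skeleton is exactly the paper's proof: the recursion depth is at most $k$, the sets at any fixed depth are disjoint pieces of the original $n$ elements, and the work at each level is dominated by \merge{ }, whose extract/insert operations cost $\ot(\log p)$ each, giving $\ot(n\log p)$ per level and $\ot(nk\log p)$ overall. Where you differ is in emphasis rather than route: the paper's proof in Appendix~\ref{apx:algo1-time} only ever counts heap creation and heap operations and silently charges nothing for the arithmetic in Steps 21--27, whereas you explicitly confront the danger that manipulating $\Theta(k\log p)$-bit values at every one of the $k$ levels would cost $\ot(nk^2\log p)$, and resolve it by observing that the algorithm only strips and re-appends base-$p$ digits (Steps 21--22, 24, 26) and increments small valuation counters (Step 27), so the per-element cost across all levels is $\ot(k\log p)$. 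That refinement is what makes the stated bit-complexity bound honest, and it is a point the paper leaves implicit. One sentence of yours should be struck, however: the early claim that summing the crude bound $\ot(|S'|k\log p)$ over all nodes ``already gives $\ot(nk\log p)$'' is false --- it gives $\ot(nk^2\log p)$, exactly as you yourself note in the final paragraph --- so as written the middle of the argument contradicts its end; keep only the final accounting (digit manipulations plus one heap operation per element per level), which is sound and matches the paper's conclusion.
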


\begin{proof}[Proof] We break the complexity analysis into $2$ parts, the time complexity for merging the subsets $S_i$'s and the time complexity due the to recursive step.
\paragraph{Time complexity of \merge{S_0,S_1,...,S_{p-1}} in Algorithm~\ref{alg:MYALG}}

Let $|S_0|+|S_1|+...+|S_{p-1}|= m$. Then, the time complexity of making the heap (Step 7) is $\ot{(\min(m,p))}$ (the size of the heap). Next, the construction of common $p$-ordering (Steps 8-14) takes $\ot{(m\log{p})}$ time, this is because extraction of an element and addition of an element are both bound by $\ot{(\log{p})}$ and the runs a total of $m$ times. Hence, the total time complexity of \merge{S_0,S_1,...,S_{p-1}} is $\ot{(\min(m,p)+m\log{p})}=\ot{(m\log{p})}$ time.
\paragraph{Time complexity of Algorithm~\ref{alg:MYALG}}
Let $|S|=n$ and $S\subset\Zpk$. Then the recursion depth of \findpordering{S} is bound by $k$. Now at each depth, all the elements are distributed into multiple heaps(of sizes $m_1,m_2,...,m_q$). Hence, the sum of sizes of all smaller sets at a given depth $\sum_{i=1}^q m_i<n$. Hence, the time to run any depth is $\sum_{i=1}^q\ot{(m_i\log{p})}=\ot{(n\log{p})}$. Hence, total time complexity for $k$ depth is $\ot{(nk\log{p})}$. \QEDA
\end{proof}
\section{Correctness and Complexity of Algorithm~\ref{alg:rep_alg}}
\subsection{Proof of Theorem \ref{thm:correct_valuation_array}} \label{apx:valuation-rep-root}
In this appendix we prove that the $valuations$ array from Algorithm \ref{alg:rep_alg} maintains the correct valuations.

First we initialize the valuations array to zero, which implies that when we have our $p$-ordering as a null set $\phi$ and add the first element to it, we can select any number according to definition \ref{def:$p$-ordering}. 

Suppose we have generated a $p$-ordering upto length $\Tilde{n}$ with $i_1, i_2 \dots i_{|S|}$ being the number of elements from each representative root in $S$. Now if we add another element to this $p$-ordering, from say the $j^{th}$ representative root, the $p$-value contributed corresponding to each of the representative roots apart from the $j^{th}$ one will be $v_p(\beta_t - \beta_j)$ where $t \neq j$, according to Observation~\ref{thm:rep_root_interaction}. Also since we have $i_t$ many elements from each of $t^{th}$ representative root, the contribution to $p$-value will be $i_tv_p(\beta_t-\beta_j)$. Next, we find the $p$-value contributed due to the same representative root. 

Notice that, from Observation~\ref{thm:merge_rep_root} we will have the elements of the $j^{th}$ representative root as a $p$-ordering as well on $\beta_j + p^{k_j}*$, of length $i_j$. Now by Theorem \ref{thm:translate_theorem}, we will have this $p$-ordering on $\beta_j+p^{k_j}*$ as $\{\beta_j, \beta_j + p^{k_j}, \beta_j+p^{k_j}2, \dots \beta_j + p^{k_j}(i_j - 1)\}$. When we add another element to this the $p$-value contributed due to $j^{th}$ representative root will be $k_jv_p(i_j!)$.

Summing them the total $p$-value at each step, considering the next element to be added being from $j^{th}$ representative root is $\sum_{t \in [|S|]; t \neq j} i_tv_p(\beta_t - \beta_j) + k_jv_p(i_j!)$. We choose $j$ such that this expression is minimum in our algorithm.

Now, we want to show that $valuations[j] = \sum_{t \in [|S|]; t \neq j} i_tv_p(\beta_t - \beta_j) + k_jv_p(i_j!)$. We do this inductively. First we already have $0$ stored in each entry of $valuations$. Let, we have obtained a $p$-ordering upto length $\Tilde{n}$ with the respective indices as $i_1, i_2 \dots i_|S|$ with the $p$-value corresponding to addition of next element from $j^{th}$ representative root correctly stored in $valuations[j]$. Next, when we add an element from say the $t^{th}$ representative root ($t = min\_index$) we need to change the $valuations$ accordingly.

When we add this element we increase $i_t$ by one $(i_t' = i_t + 1)$. Now when we add another element, say $m$, (after the last element from the $t^{th}$ representative root), if $m \neq t$ then the new $p$-value will be $\sum_{l \in [|S|]; l \not \in \{t, m\}} i_lv_p(\beta_l - \beta_m)  + (i_t + 1)v_p(\beta_t -\beta_m)+ k_mv_p(i_m!)$ which is $v_p(\beta_t -\beta_m)$ more than the previous $valuations[m]$. So accordingly we add this value in the previous step (when we find $t$ as the $min\_index$ and then update in Steps 29-30). 

However if this $m$ (the next $min\_index$ after adding an element from $t^{th}$ representative root) is same as $t$, then the $p$-value will be $\sum_{l \in [|S|]; l \neq t} i_lv_p(\beta_l - \beta_t) + k_tv_p((i_t + 1)!)$ while the previous value of $valuations[t]$ was $\sum_{l \in [|S|]; l \neq t} i_lv_p(\beta_l - \beta_t) + k_tv_p(i_t!)$ and this difference $v_p((i_t+1)!) - v_p(i_t!)$ is stored in \pexpincrease{i_j}. We thereby update Steps 31-32 of Algorithm \ref{alg:rep_alg} to incorporate this change. Hence $valuations$ correctly stores the $p$-value as desired.
 \QEDA
 
\subsection{Time complexity of Algorithm~\ref{alg:rep_alg}}
\label{apx:rep-algo-time}
\begin{theorem}
\label{thm:time-rep-algo}
Given a set $S \subset \Zpk$, for a prime $p$ and an integer $k$, that can be represented in terms of $d$ representative roots, Algorithm~\ref{alg:rep_alg} finds a $p$-ordering of length $n$ for $S$ in $\ot(d^2k\log p + nk \log p + np)$ time.
\end{theorem}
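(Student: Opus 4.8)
The plan is to bound the running time of each of the three procedures of Algorithm~\ref{alg:rep_alg} --- \correlate{ }, \pexpincrease{ }, and the main loop of \findpordering{ } --- and then add up the contributions. Throughout I use that every element of $\Zpk$ is an integer with $\ot(k\log p)$ bits, so a single arithmetic operation on such elements, or computing a valuation $v_p(\cdot)$ by repeatedly dividing out $p$, costs $\ot(k\log p)$; by contrast, every quantity stored as an \emph{exponent} --- the entries of \emph{valuations}, \emph{corr} and \emph{increase} --- is bounded by $O(ndk)$, hence an $O(\log(ndk))$-bit integer, and since $n\le|S|\le p^k$ all these magnitudes are polynomial in the input size, so comparing or adding two of them is an $\ot(1)$ operation. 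I also absorb into the bound the preprocessing described just before the algorithm, which deletes contained representative roots (Observation~\ref{thm:intersect_theorem}) and leaves $d$ pairwise-disjoint ones in $\ot(d^2k\log p)$ time.

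\emph{Cost of \correlate{ } and \pexpincrease{ }.} \correlate{ } fills a $d\times d$ table whose $(j,l)$ entry is $v_p(\beta_j-\beta_l)$; each entry is a valuation of a difference of two $\Zpk$-elements costing $\ot(k\log p)$, so the procedure runs in $\ot(d^2k\log p)$. \pexpincrease{ } is a loop of $n$ iterations whose $j$-th step computes $v_p(j+1)$ (the increment $v_p((j+1)!)-v_p(j!)$ recorded in \emph{increase}) plus one $O(\log n)$-bit addition. Since a single $v_p(j+1)$ may need $\Theta(k)$ divisions, a per-iteration estimate would only give $\ot(nk^2\log p)$; the point is to amortize: the total number of divide-by-$p$ operations over the whole loop is $\sum_{j\le n+1}v_p(j)=v_p((n+1)!)\le(n+1)/(p-1)=O(n)$ by Legendre's formula, each acting on an $\ot(k\log p)$-bit number, so all the valuation work together is $\ot(nk\log p)$, and the $n$ small additions add $\ot(n\log n)=\ot(nk\log p)$ (using $n\le p^k$). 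Hence \pexpincrease{ } runs in $\ot(nk\log p)$.

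\emph{Cost of the main loop of \findpordering{ }.} After the two calls above, the loop runs $n$ times, and each iteration performs: (i) a linear scan of the length-$d$ array \emph{valuations} to find its minimum, say at index $j^\star$, in $\ot(d)$; (ii) emitting the next element $\beta_{j^\star}+p^{k_{j^\star}}\cdot i_{j^\star}$ of the selected representative root, a single $\Zpk$-arithmetic costing $\ot(k\log p)$; and (iii) updating all $d$ entries of \emph{valuations} --- the entry of $j^\star$ via the precomputed \emph{increase} array and each other entry $l$ by adding the precomputed $v_p(\beta_{j^\star}-\beta_l)$, which are exactly the correct updates by Theorem~\ref{thm:correct_valuation_array} --- which is $\ot(d)$ since each update is $\ot(1)$. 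Over the $n$ iterations this is $\ot(n(d+k\log p))=\ot(nd+nk\log p)$. Adding this to the $\ot(d^2k\log p)$ of \correlate{ } (and of the preprocessing) and the $\ot(nk\log p)$ of \pexpincrease{ } gives a total running time of $\ot(d^2k\log p+nk\log p+nd)$, which is the claimed bound.

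\emph{Main obstacle.} No step is hard in isolation; the one genuinely delicate point is the amortized count that keeps \pexpincrease{ } at $\ot(nk\log p)$ rather than $\ot(nk^2\log p)$, for which the bound $v_p(n!)=O(n/(p-1))$ is essential. The remaining care is bookkeeping: one must check that the exponents stored in \emph{valuations}, \emph{corr} and \emph{increase} stay short enough --- they are $O(ndk)$ with $n\le p^k$ --- that the per-entry comparisons and additions in step~(iii) can honestly be charged $\ot(1)$; otherwise that step would cost $\ot(dk\log p)$ per iteration and the $nd$ term would degrade to $\ot(ndk\log p)$.
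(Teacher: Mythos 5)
Your proposal is correct and follows essentially the same route as the paper's proof: a procedure-by-procedure accounting of \correlate{ } ($\ot(d^2k\log p)$), \pexpincrease{ } ($\ot(nk\log p)$), and the main loop ($\ot(nd)$ plus the cost of emitting elements), just with more care than the paper about amortizing the valuation computations and about the bit-length of the stored exponents. Note only that your final bound $\ot(d^2k\log p+nk\log p+nd)$ matches the paper's own proof (and its abstract), while the theorem statement's ``$np$'' term appears to be a typo in the paper rather than a discrepancy on your side.
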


\begin{proof}[Proof]Let $S$ contains $d$ representative roots of $\Zpk$ and we want to find the $p$-ordering up to length $n$, then, \correlate{S} runs a double loop, each of size $d$, and each iteration takes $\ot{(k\log{p})}$, hence, \correlate{S} takes  $\ot{(d^2k\log{p})}$. \pexpincrease{n} runs a single loop of size $n$ where each iteration takes $\ot{(k\log{p})}$ time, hence, it takes $\ot{(nk\log{p})}$. Then main loop run a loop of size $n$, inside this loop we do $\mathcal{O}(d)$ operations on elements of size $\log{k}$, hence, it takes $\ot{(nd)}$ time. Hence, in total, our algorithm takes $\ot{(d^2k\log{p}+nk\log{p}+nd)}$ time.\QEDA
\end{proof}
\section{Structure of root sets}
\begin{observation}
\label{thm:minimal-complete-tree}
Let $f(x)=\sum_{i=0}^\infty b_i\cdot x^i \in \Zpk[x]$, for $k<p$($k$ is small), be a polynomial with root-set $A$. Let $\alpha_i\equiv j \bmod{p}$ for all $i\in [k]$, be $k$ numbers such that for no $i,j$, $\alpha_i-\alpha_j\nequiv 0 \bmod{p^2}$. Let $\alpha_i \in A$, for all $i\in [k]$, then $S_j = \{s\in \Zpk\mid s \equiv j \bmod{p}\} \subseteq A$.
\end{observation}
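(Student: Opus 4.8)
The plan is to exploit the fact that membership of an element in the root-set $A$ is governed by congruences on the coefficients $b_i$ of $f$, obtained by plugging in the element and reducing modulo $p^k$. First I would normalize: by translating (replacing $f(x)$ by $f(j + px)$, which is again a polynomial over $\Zpk$ and only shifts the root-set by a translation that preserves the claimed conclusion via Observation~\ref{thm:translate_theorem}-type reasoning on root-sets), I can assume $j = 0$, so that the $\alpha_i$ are all divisible by $p$ but pairwise distinct modulo $p^2$; writing $\alpha_i = p\gamma_i$, the $\gamma_i$ are $k$ pairwise-distinct residues modulo $p$. The goal becomes: show that every element of $p\Zpk$ (i.e. every $s \equiv 0 \bmod p$) is a root of $f$.

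The key step is a valuation/divisibility argument on the product $\prod_{i\in[k]}(x - \alpha_i)$. For any $s \equiv 0 \bmod p$, consider the $k$ values $v_p(s - \alpha_i) = 1 + v_p(s/p - \gamma_i)$. Since the $\gamma_i$ are $k$ distinct residues mod $p$, at most one of the differences $s/p - \gamma_i$ can be divisible by $p$; more importantly, the multiset of these valuations is exactly what a $p$-ordering on $\{\alpha_i\}$ would produce. Concretely, I would invoke that $\{\alpha_i : i \in [k]\}$ together with any further element $s$ behaves, for valuation purposes, like an initial segment of a $p$-ordering: the $p$-value $v_p\big(\prod_{i\in[k]}(s-\alpha_i)\big)$ is at least $v_p(k!) + k$ (the generalized-factorial bound of Bhargava, applied after dividing out one factor of $p$ from each term, using that the $\gamma_i$ are all distinct mod $p$). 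Then I write $f$ on the coset $p\Zpk$ in the basis given by the falling-factorial-type polynomials built from $\alpha_0,\dots,\alpha_{k-1}$: since $f(\alpha_i) \equiv 0 \bmod{p^k}$ for all $i\in[k]$, the first $k$ coefficients in this basis vanish modulo the appropriate powers of $p$, and the remaining terms all carry a factor $\prod_{i\in[k]}(x-\alpha_i)$, whose valuation on $p\Zpk$ already exceeds $k$ — wait, I need it to be $\ge k$, and the generalized factorial bound combined with the $k$ distinct residues mod $p$ gives exactly $v_p\big(\prod_{i\in[k]}(s-\alpha_i)\big) \ge k$, forcing $f(s) \equiv 0 \bmod{p^k}$.

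The main obstacle, and the step I would spend the most care on, is making the "basis" argument fully rigorous: I need that any $f \in \Zpk[x]$ vanishing (mod $p^k$) at $\alpha_0,\dots,\alpha_{k-1}$ can be written, over $\Zpk$, as $f(x) \equiv c \cdot \prod_{i\in[k]}(x-\alpha_i) + (\text{lower-degree correction vanishing mod } p^k \text{ on all of } p\Zpk)$, and that the lower-degree correction — expressed in the $p$-ordering basis $\psi_0,\psi_1,\dots,\psi_{k-1}$ with $\psi_m(x)=\prod_{i<m}(x-\alpha_i)$ — has each coefficient divisible by $p^{k - v_p(m\text{-th term of }p\text{-sequence})}$, which is precisely enough to kill it modulo $p^k$ at every point of the coset, again using Observation~\ref{thm:merge} to restrict attention to the single residue class $0 \bmod p$. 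Once that structural decomposition is in place, the conclusion $S_j \subseteq A$ is immediate, and combined with the reverse containment (trivial, since $A$ restricted to the class $j$ can only be a subset of $S_j$) and the Dearden–Metzger reduction, this pins down that the root-set on class $j$ is all of $S_j$ whenever $k$ distinct-mod-$p^2$ roots are present.
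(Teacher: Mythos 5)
Your route is correct in substance but genuinely different from the paper's. The paper expands $f(j+p\beta_l)$ in Hasse derivatives, $f(j+p\beta_l)\equiv\sum_{i=0}^{k-1}p^i\beta_l^i g_i(j) \bmod p^k$, reads the $k$ vanishing conditions as a Vandermonde system in the unknowns $p^i g_i(j)$, and inverts the matrix modulo $p^k$ (its determinant is a unit because the $\beta_l$ are pairwise distinct mod $p$) to get $p^i g_i(j)\equiv 0\bmod p^k$, whence every element of $S_j$ is a root; this also hands over the explicit derivative congruences reused in the case analyses of Appendices~\ref{apx:struct-2k}--\ref{apx:struct-4k}. You instead divide $f$ by the monic polynomial $\prod_{i\in[k]}(x-\alpha_i)$ and control the remainder in the Newton ($p$-ordering) basis $\psi_m(x)=\prod_{i<m}(x-\alpha_i)$. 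The decomposition you flag as the main obstacle does go through, and more easily than you suggest: the cofactor must be allowed to be a polynomial $q(x)$ rather than a constant $c$ (harmless, since $q(s)\prod_{i\in[k]}(s-\alpha_i)\equiv 0\bmod p^k$ for any $s\equiv j\bmod p$ because each of the $k$ factors has valuation at least $1$), and the coefficient divisibility $p^{k-m}\mid c_m$ for the remainder $r=\sum_{m<k}c_m\psi_m$ follows by a short induction on $m$ from $r(\alpha_m)\equiv 0\bmod p^k$ together with $v_p(\psi_m(\alpha_m))=m$ exactly (all pairwise differences $\alpha_i-\alpha_{i'}$ have valuation exactly $1$ by the hypothesis that the $\alpha_i$ are congruent mod $p$ but distinct mod $p^2$), so each $c_m\psi_m(s)$ vanishes mod $p^k$ on the whole residue class. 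Two small clean-ups: the appeal to Bhargava's factorial bound is unnecessary, since with $k<p$ it yields nothing beyond the trivial bound $v_p\bigl(\prod_{i\in[k]}(s-\alpha_i)\bigr)\geq k$, which is all you use; and Observation~\ref{thm:merge} plays no real role in your argument. What your approach buys is a proof in the same language as the rest of the paper ($p$-orderings and the associated polynomial basis) that never differentiates $f$; what the paper's buys is the explicit congruences $g_i(j)\equiv 0 \bmod p^{k-i}$ that drive the small-$k$ classifications.
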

\begin{proof}
Let, for all $i\in [k]$, $\alpha_l = j + p*\beta_l$, then since $\alpha_l$ is in the root set of $f(\cdot)$, therefore, $$f(j + p*\beta_l)=\sum_{i=0}^\infty b_l\cdot (j + p*\beta_l)^i\equiv 0 \bmod{p^k}.$$ 
Hence, $$\sum_{i=0}^{k-1} p^i\cdot\beta_l^i\cdot g_i(j)\equiv 0 \bmod{p^k},$$ where, $g_i(x) = \sum_{n=0}^\infty  \binom{n+i}{n}\cdot b_{n+i}\cdot x^n$. Writing this system of equations in the form of matrices $B\cdot X = 0 \bmod{p^k}$, we get, 
\[
\begin{bmatrix}
1 & \beta_0 & \cdots & \beta_0^{k-1} \\
1 & \beta_1 & \cdots & \beta_1^{k-1} \\
\vdots & \vdots & \ddots & \vdots \\
1 & \beta_{k-1} & \cdots & \beta_{k-1}^{k-1} \\
\end{bmatrix}
\begin{bmatrix}
g_0(j) \\
p\cdot g_1(j) \\
\vdots \\
p^{k-1}\cdot g_{k-1}(j) \\
\end{bmatrix} = 
\begin{bmatrix}
0 \\
0 \\
\vdots \\
0 \\
\end{bmatrix}
\bmod{p^k}.
\]
Here, $|det(B)|=\left\lvert\prod\limits_{i\neq j\in[k]} (\beta_i-\beta_j)\right\rvert$. Since $\beta_i-\beta_j\nequiv 0 \bmod{p}$, therefore, $det(B)\nequiv 0 \bmod{p}$. Hence, $B$ has an inverse. Multiplying by the inverse on both sides, we get,
\[
\begin{bmatrix}
g_0(j) \\
p\cdot g_1(j) \\
\vdots \\
p^{k-1}\cdot g_{k-1}(j) \\
\end{bmatrix} = 
\begin{bmatrix}
0 \\
0 \\
\vdots \\
0 \\
\end{bmatrix}
\bmod{p^k}, or, 
\]
for $i \in [k]$, $g_i(j)\equiv 0 \bmod{p^{k-i}}$. Hence, for any element $j+p\cdot\beta \in S_j$, $f(j + p*\beta)=\sum_{i=0}^{k-1} p^i\cdot\beta_l^i\cdot g_i(j)\equiv 0 \bmod{p^k}$ (since $p^i\cdot g_i(j)\equiv 0 \bmod{p^k}$). Therefore, all elements of $S_j$ are a root of $f(\cdot)$, or $S_j \subseteq A$.\QEDA
\end{proof}
\subsection{Structure of root sets in $\Zp{2}$}
\label{apx:struct-2k}
From Section~\ref{sec:struct-2k}, we know if $\alpha=\alpha_0+\alpha_1\cdot p\in\Zp{2}$ be a root of some $f(x)$ in $\Zp{2}$.
Then $$f(\alpha_0)+p\cdot\alpha_1\cdot f'(\alpha_0)=0\bmod{p^2}.$$
Fixing $\alpha_0$ to some $j$, we start looking at structures.
\subsubsection{Case 1: root set contains atleast two roots}
Let, our root set $R_j$ contains two distinct roots, say $j+\alpha_1^0\cdot p$ and $j+\alpha_1^1\cdot p$. Then, $$f(j)+p\cdot\alpha_1^0\cdot f'(j)=0\bmod{p^2},$$ and $$f(j)+p\cdot\alpha_1^1\cdot f'(j)=0\bmod{p^2}.$$ 
Solving the above 2 equations, we get $$f(j)=0\bmod{p^2},$$ and $$f'(j)=0\bmod{p}.$$ Hence,  any $j+\tilde{\alpha}_1\cdot p$, for $\tilde{\alpha}_1\in [p]$, is a root of the polynomial, or $R_j=j+p\cdot\ast.$ 

Since, there's no free variable in $R_j$, we just have $1$ root-set of this structure.
\subsubsection{Case 2: root set contains one root}
Let, our root set $R_j$ contains just one root, say $j+\alpha_1\cdot p$. Then, $$f(j)+p\cdot\alpha_1\cdot f'(j)=0\bmod{p^2}.$$ One can easily see that no new roots seep in at this point and a root set of this form is possible\footnote{Namely $f(x)=x-(j+\alpha_1\cdot p)$.}. Hence, $R_j=j+p\cdot\alpha_1,$ for $\alpha_1\in [p]$.

Since, $\alpha_1\in [p]$, we just have $p$ root-sets of this structure.
\subsubsection{Case 3: root set is empty}
Let our root set is empty.\footnote{Namely $f(x)=a$, where $a\neq 0$}. Hence, $R_j=\emptyset.$ 

Since, there's no free variable in $R_j$, we just have $1$ root-set of this structure.

Therefore, $N_{p_2}=p+2$. Hence, 
\[   
R_j = 
     \begin{cases}
       j+p\cdot\ast\text{,} \\
       j+p\cdot\alpha\text{, for } \alpha\in [p]\text{,} \\ 
       \emptyset\text{.}
     \end{cases}
\]

\subsection{Structure of root sets in $\Zp{3}$}
\label{apx:struct-3k}
From Section~\ref{sec:struct-3k}, we know if $\alpha=\alpha_0+\alpha_1\cdot p+\alpha_2\cdot p^2\in\Zp{3}$ be a root of some $f(x)$ in $\Zp{3}$.
Then, $$f(\alpha_0)+p\cdot\alpha_1\cdot f'(\alpha_0)+\left((\alpha_1)^2\cdot\frac{f''(\alpha_0)}{2}+\alpha_2\cdot f'(\alpha_0)\right)\cdot p^2=0\bmod{p^3}.$$
Fixing $\alpha_0$ to some $j$, we start looking at structures.
\subsubsection{Case 1: root set contains atleast three roots different at $p^1$}
Let, our root set $R_j$ contains three roots, say $j+\alpha_1^0\cdot p+\alpha_2^0\cdot p^2$, $j+\alpha_1^1\cdot p+\alpha_2^1\cdot p^2$ and $j+\alpha_1^2\cdot p+\alpha_2^2\cdot p^2$ for $\alpha_1^0\neq\alpha_1^1\neq\alpha_1^2$. Then, substituting the value and solving the 3 equations, we get $$f(j)=0\bmod{p^3},$$ $$f'(j)=0\bmod{p^2},$$ and $$f''(j)=0\bmod{p}.$$ Hence, any $j+\tilde{\alpha}_1\cdot p+\tilde{\alpha}_2\cdot p^2$, for $\tilde{\alpha}_1,\tilde{\alpha}_2\in [p]$, is a root of the polynomial, or $R_j=j+p\cdot\ast.$ 

Since, there's no free variable in $R_j$, we just have $1$ root-set of this structure.
\subsubsection{Case 2: root set contains two roots different at $p^1$}
Let, our root set $R_j$ contains three roots, say $j+\alpha_1^0\cdot p+\alpha_2^0\cdot p^2$ and $j+\alpha_1^1\cdot p+\alpha_2^1\cdot p^2$ for $\alpha_1^0\neq\alpha_1^1$. Then, substituting the value and solving the 2 equations, we get $$f(j)=0\bmod{p^2},$$ and $$f'(j)=0\bmod{p}.$$ Hence, any $j+\alpha_1^0\cdot p+\tilde{\alpha}_2\cdot p^2$, for $\tilde{\alpha}_2\in [p]$, and $j+\alpha_1^1\cdot p+\tilde{\alpha}_2\cdot p^2$, for $\tilde{\alpha}_2\in [p]$, is a root of the polynomial, or $R_j=\left(j+p\cdot\alpha_1^0+p^2\cdot\ast\right)\cup\left(j+p\cdot\alpha_1^1+p^2\cdot\ast\right),$ for $\alpha_1^0\neq\alpha_1^1$ and $\alpha_1^0,\alpha_1^1\in\{0,1,...p-1\}.$

Since, $\alpha_1^0\neq\alpha_1^1$ and $\alpha_1^0,\alpha_1^1\in\{0,1,...p-1\}$, we just have $\frac{p\cdot(p-1)}{2}$ root-sets of this structure.
\subsubsection{Case 3: root set contains two roots same at $p^1$}
Let, our root set $R_j$ contains three roots, say $j+\alpha_1\cdot p+\alpha_2^0\cdot p^2$ and $j+\alpha_1\cdot p+\alpha_2^1\cdot p^2$ for $\alpha_2^0\neq\alpha_2^1$. Then, substituting the value and solving the 2 equations, we get $$f(j)=0\bmod{p^2},$$ and $$f'(j)=0\bmod{p}.$$ Hence, any $j+\alpha_1\cdot p+\tilde{\alpha}_2\cdot p^2$, for $\tilde{\alpha}_2\in [p]$, is a root of the polynomial, or $R_j=j+p\cdot\alpha_1+p^2\cdot\ast,$ for $\alpha_1,\alpha_1^1\in\{0,1,...p-1\}.$

Since, $\alpha_1\in\{0,1,...p-1\}$, we just have $p$ root-sets of this structure.
\subsubsection{Case 4: root set contains one root}
Similar to Appendix~\ref{apx:struct-2k}, we can have just one root $\alpha=j+\alpha_1\cdot p+\alpha_2\cdot p^2$ as a root of $f(x)$ and no new roots seep in. Hence, $R_j=j+p\cdot\alpha_1+p^2\cdot\alpha_2,$ for $\alpha_1,\alpha_2\in [p]$.

Since, $\alpha_1,\alpha_2\in [p]$, we just have $p^2$ root-set of this structure.
\subsubsection{Case 5: root set is empty}
Similar to Appendix~\ref{apx:struct-2k}, our root set can be empty. Hence, $R_j=\emptyset.$ 

Since, there's no free variable in $R_j$, we just have $1$ root-set of this structure.

Therefore, $N_{p_3}=\frac{3p^2+p+4}{2}$. Hence, 
\[   
R_j = 
     \begin{cases}
       j+p\cdot\ast\text{,} \\
       (j+p\cdot\alpha_1+p^2\ast)\cup(j+p\cdot\alpha_2+p^2\ast)\text{, for } \alpha_1\neq\alpha_2\in [p]\text{,} \\
       j+p\cdot\alpha+p^2\ast\text{, for } \alpha\in [p]\text{,} \\
       j+p\cdot\alpha_1+p^2\cdot\alpha_2\text{, for } \alpha_1,\alpha_2\in [p]\text{,} \\ 
       \emptyset\text{.}
     \end{cases}
\]

\subsection{Structure of root sets in $\Zp{4}$}
\label{apx:struct-4k}
From Section~\ref{sec:struct-4k}, we know if $\alpha=\alpha_0+\alpha_1\cdot p+\alpha_2\cdot p^2+\alpha_3\cdot p^3\in\Zp{4}$ be a root of some $f(x)$ in $\Zp{4}$.
Then, \begin{multline*}
    f(\alpha_0)+p\cdot\alpha_1\cdot f'(\alpha_0)+\left((\alpha_1)^2\cdot\frac{f''(\alpha_0)}{2}+\alpha_2\cdot f'(\alpha_0)\right)\cdot p^2\\
    +\left((\alpha_1)^3\cdot\frac{f'''(\alpha_0)}{6}+2\cdot\alpha_1\cdot\alpha_2\cdot f''(\alpha_0)+\alpha_3\cdot f'(\alpha_0)\right)\cdot p^3=0\bmod{p^4}.
\end{multline*}

Fixing $\alpha_0$ to some $j$, we start looking at structures.
\subsubsection{Case 1: root set contains atleast four roots different at $p^1$}
Let, our root set $R_j$ contains four roots different at $p^1$. Then, substituting the value and solving the 4 equations, we get $$f(j)=0\bmod{p^4},$$ $$f'(j)=0\bmod{p^3},$$ $$f''(j)=0\bmod{p^2},$$ and $$f'''(j)=0\bmod{p}.$$ Hence, $R_j=j+p\cdot\ast.$ 

Since, there's no free variable in $R_j$, we just have $1$ root-set of this structure.
\subsubsection{Case 2: root set contains three roots different at $p^1$}
Let, our root set $R_j$ contains three roots different at $p^1$. Then, substituting the value and solving the 3 equations, we get $$f(j)=0\bmod{p^3},$$ $$f'(j)=0\bmod{p^2},$$ and $$f''(j)=0\bmod{p}.$$ Hence, $$R_j = (j+p\cdot\alpha_1+p^2\ast)\cup(j+p\cdot\alpha_2+p^2\ast)\cup(j+p\cdot\alpha_3+p^2\ast)$$ Since, $\alpha_1\neq\alpha_2\neq\alpha_3\in [p]$, we have $\frac{p(p-1)(p-2)}{6}$ such root sets.
\subsubsection{Case 3: root set contains three roots of which 2 are different at $p^1$ and 2 are different at $p^2$}
Similar to last case, we get $$f(j)=0\bmod{p^3},$$ $$f'(j)=0\bmod{p^2},$$ and $$f''(j)=0\bmod{p}.$$ Hence,
$$R_j = (j+p\cdot\alpha_1+p^2\ast)\cup(j+p\cdot\alpha_2+p^2\ast)$$ Since, $\alpha_1\neq\alpha_2\in [p]$, we have $\frac{p(p-1)}{2}$ such root sets.
\subsubsection{Case 4: root set contains two roots different at $p^2$}
Similar to last case, we get $$f(j)=0\bmod{p^3},$$ $$f'(j)=0\bmod{p^2},$$ and $$f''(j)=0\bmod{p}.$$ Hence, $$R_j = (j+p\cdot\alpha_1+p^2\ast)$$ Since, $\alpha_1\in [p]$, we have $p$ such root sets.
\subsubsection{Case 5: root set contains two roots different at $p^1$}
Let, our root set $R_j$ contains two roots different at $p^1$. Then, substituting the value and solving the 2 equations, we get $$f(j)=0\bmod{p^2},$$ and $$f'(j)=0\bmod{p}.$$ Hence,$$R_j = (j+p\cdot\alpha_1+p^2\cdot\beta_1+p^3\ast)\cup(j+p\cdot\alpha_2+p^2\cdot\beta_2+p^3\ast)$$ Since, $\alpha_1,\alpha_2,\beta_1,\beta_2\in [p]$ and $\alpha_1\neq\alpha_2$, we have $\frac{p^3(p-1)}{2}$ such root sets.
\subsubsection{Case 6: root set contains two roots different at $p^3$}
Let, our root set $R_j$ contains two roots different at $p^3$. Then, substituting the value and solving the 2 equations, we get $$f(j)=0\bmod{p^2},$$ and $$f'(j)=0\bmod{p}.$$ Hence, $$R_j = (j+p\cdot\alpha_1+p^2\cdot\alpha_2+p^3\cdot\ast)$$ Since, $\alpha_1,\alpha_2\in [p]$, we have $p^2$ such root sets.
\subsubsection{Case 7: root set is a single element}
Similar to the Appendix~\ref{apx:struct-3k}, $$R_j = (j+p\cdot\alpha_1+p^2\cdot\alpha_2+p^3\cdot\alpha_3)$$ Since, $\alpha_1,\alpha_2,\alpha_3\in [p]$, we have $p^3$ such root sets.
\subsubsection{Case 8: root set is a empty}
$$R_j = \emptyset$$ We have 1 such root set.

Therefore, $N_{p_4}=\frac{3p^4+4p^3+6p^2+5p+12}{6}$. Hence, 
\[   
R_j = 
     \begin{cases}
       j+p\cdot\ast\text{,} \\
       (j+p\cdot\alpha_1+p^2\ast)\cup(j+p\cdot\alpha_2+p^2\ast)\cup(j+p\cdot\alpha_3+p^2\ast)\text{, for } \alpha_1\neq\alpha_2\neq\alpha_3\in [p]\text{,} \\
       (j+p\cdot\alpha_1+p^2\ast)\cup(j+p\cdot\alpha_2+p^2\ast)\text{, for } \alpha_1\neq\alpha_2\in [p]\text{,} \\
       j+p\cdot\alpha+p^2\ast\text{, for } \alpha\in [p]\text{,} \\
       (j+p\cdot\alpha_1+p^2\cdot\beta_1+p^3\ast)\cup(j+p\cdot\alpha_2+p^2\cdot\beta_2+p^3\ast)\text{, for } \alpha_1\neq\alpha_2\,\beta_1,\beta_2\in [p]\text{,}\\
       j+p\cdot\alpha_1+p^2\cdot\alpha_2+p^3\cdot\ast\text{, for } \alpha_1,\alpha_2\in [p]\text{,} \\ 
       j+p\cdot\alpha_1+p^2\cdot\alpha_2+p^3\cdot\alpha_3\text{, for } \alpha_1,\alpha_2,\alpha_3\in [p]\text{,} \\ 
       \emptyset\text{.}
     \end{cases}
\] 

\end{subappendices}

\end{document}